\numberwithin{equation}{section}
\theoremstyle{plain} 
\newtheorem{thm}{\indent\sc Theorem}[section] 
\newtheorem{lem}[thm]{\indent\sc Lemma}
\newtheorem{cor}[thm]{\indent\sc Corollary} 
\newtheorem{prop}[thm]{\indent\sc Proposition}
\theoremstyle{definition}
\newtheorem{dfn}[thm]{\indent\sc Definition}
\newtheorem{exa}[thm]{\indent\sc Example}
\newtheorem{rem}[thm]{\indent\sc Remark}
\newcommand{\hol}[2] { #1 \rightarrow #2 }
\newcommand{\kah}{K\"{a}hler }
\newcommand{\Sym}[0]{\operatorname{Sym}}
\newcommand{\deldel}{\sqrt{-1}\partial \overline{\partial}}
\title[Characterization of pseudo-effective vector bundles by sHm]
{Characterization of pseudo-effective vector bundles \\ by singular Hermitian metrics}
\author{Masataka Iwai}
\address{Graduate School of Mathematical Sciences, The University of Tokyo, 3-8-1 Komaba,
Tokyo, 153-8914, Japan.}
 \email{{\tt
masataka@ms.u-tokyo.ac.jp, masataka.math@gmail.com}}
\subjclass[2010]{Primary 32J25, Secondary 14J60, 14E30}
\keywords
{Singular Hermitian metrics of vector bundle,
Pseudo-effective,
Weakly positive,
Big
}
\begin{document}
\maketitle

\begin{abstract}
In this paper, we give complex geometric descriptions of the 
notions of algebraic geometric positivity of vector bundles and torsion-free coherent sheaves,
such as nef, big, pseudo-effective, and weakly positive, by using singular Hermitian metrics.
As an applications, we obtain a generalization of Mori's result.
We also give a characterization of  the augmented base locus 
by using singular Hermitian metrics on vector bundles and the Lelong numbers.

\end{abstract}
\tableofcontents
\section{Introduction}

In \cite{Kodaira}, Kodaira proved that a line bundle $L$ is ample if and only if $L$ has a smooth Hermitian metric with positive curvature.
After that, Demailly \cite{Dem93} gave complex geometric descriptions of nef, big, and pseudo-effective line bundles.
For example, he proved that a line bundle $L$ is pseudo-effective if and only if $L$ has a singular Hermitian metric
with semipositive curvature current.
Ample, nef, big, and pseudo-effective are notions of algebraic geometric positivity.
Thus, their works related algebraic geometry to complex geometry.

The aim of this paper is to give complex geometric descriptions of notions of algebraic geometric positivity of vector bundles and torsion-free coherent sheaves.
Griffiths \cite{Gri} proved that if a vector bundle $E$ has a Griffiths positive metric, then $E$ is ample (i.e. $\mathcal{O}_{ \mathbb{P}(E) }(1)$ is ample).
The inverse implication is unknown. We do not know whether an ample vector bundle has a Griffiths positive metric.
This is so-called Griffiths' conjecture, which is one of longstanding open problems.
In recent years, 
Liu, Sun, and Yang \cite{LSY} gave a partial answer to this conjecture.
\begin{thm}\cite[Theorem 1.2 and Corollary 4.6]{LSY}
Let $X$ be a smooth projective variety and $E$ be a holomorphic vector bundle on $X$.
If $E$ is ample, 
then there exists  $k \in \mathbb{N}_{>0}$ such that $\Sym^k(E)$ has a Griffiths $($Nakano$)$ positive smooth Hermitian metric.
\end{thm}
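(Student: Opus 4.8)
The plan is to realise $\Sym^{k}(E)$ as a direct image sheaf on the projective bundle $\pi\colon\mathbb{P}(E)\to X$ and to read off the positivity we want from the curvature of a fibrewise $L^{2}$-metric by invoking Berndtsson's theorem on the curvature of direct image bundles. Since $E$ is ample, $\mathcal{O}_{\mathbb{P}(E)}(1)$ is by definition an ample line bundle on the smooth projective variety $\mathbb{P}(E)$, so by Kodaira's theorem (recalled in the introduction above) it carries a smooth Hermitian metric $h_{0}$ whose Chern curvature $\omega$ is a strictly positive $(1,1)$-form on all of $\mathbb{P}(E)$, in particular along every fibre $\mathbb{P}(E_{x})\cong\mathbb{P}^{r-1}$, where $r$ is the rank of $E$. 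Next I would invoke the relative canonical bundle formula, which follows from the relative Euler sequence, namely $K_{\mathbb{P}(E)/X}\cong\mathcal{O}_{\mathbb{P}(E)}(-r)\otimes\pi^{*}\det E$; consequently, for every $k\in\mathbb{N}_{>0}$,
\[
  \mathcal{O}_{\mathbb{P}(E)}(k)\;\cong\;K_{\mathbb{P}(E)/X}\otimes A_{k},\qquad A_{k}:=\mathcal{O}_{\mathbb{P}(E)}(k+r)\otimes\pi^{*}(\det E)^{-1}.
\]
Because $\mathcal{O}_{\mathbb{P}(E)}(1)$ is ample and $\pi^{*}(\det E)^{-1}$ is a fixed line bundle, $A_{k}$ is ample for all $k\geq k_{0}$, where $k_{0}$ depends only on $E$; I fix one such $k$ and, by Kodaira again, a smooth metric $h_{k}$ on $A_{k}$ with strictly positive Chern curvature.

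Then I would recall that $\pi_{*}\mathcal{O}_{\mathbb{P}(E)}(k)=\Sym^{k}(E)$, that $R^{q}\pi_{*}\mathcal{O}_{\mathbb{P}(E)}(k)=0$ for $q>0$ (fibrewise, $H^{q}(\mathbb{P}^{r-1},\mathcal{O}(k))=0$ for $q>0$ and $k\geq 0$), and that the fibre dimension $h^{0}(\mathbb{P}^{r-1},\mathcal{O}(k))=\binom{k+r-1}{r-1}$ is constant; by Grauert's theorem the natural $L^{2}$-metric on
\[
  \Sym^{k}(E)\;=\;\pi_{*}\!\bigl(K_{\mathbb{P}(E)/X}\otimes A_{k}\bigr)
\]
--- obtained by viewing a local section as a holomorphic $A_{k}$-valued $(r-1,0)$-form on the fibre and integrating over the fibre the square of its pointwise norm, taken with $h_{k}$ --- is a smooth Hermitian metric on $\Sym^{k}(E)$. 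Since $\pi$ is a proper holomorphic submersion, $\mathbb{P}(E)$ is K\"ahler, and the Chern curvature of $(A_{k},h_{k})$ is strictly positive on the whole of $\mathbb{P}(E)$, Berndtsson's theorem on the curvature of direct images then applies and shows that this $L^{2}$-metric is Nakano positive, and strictly so, the strictness being forced by the strict positivity of the curvature of $A_{k}$; as a Nakano positive metric is a fortiori Griffiths positive, this finishes the proof. (Equivalently one may equip $\Sym^{k}(E)$ with the $L^{2}$-metric built from $h_{0}^{\otimes k}$ together with a fibrewise volume form and argue identically; absorbing the volume form into $K_{\mathbb{P}(E)/X}$, as above, is what makes the metric canonical and puts it in exactly the shape required by Berndtsson's theorem.)

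The hard part is this last input. Semipositivity of $L^{2}$-metrics on direct images is comparatively soft, but what is needed here is \emph{strict} Nakano positivity, and this forces the use of the strict positivity of the curvature of $A_{k}$ both in the vertical directions --- which makes the fibrewise-curvature term in Berndtsson's curvature formula strictly positive --- and in the horizontal ones --- which governs the term built from the $\overline{\partial}$ of the harmonic horizontal lift, the so-called geodesic-curvature term. Checking that both contributions are strictly positive rather than merely nonnegative for the metrics at hand is where the genuine work lies. A more computational alternative is to estimate the Chern curvature of the $L^{2}$-metric directly via the asymptotic expansion, along the fibres, of the Bergman kernel of $\bigl(\mathcal{O}_{\mathbb{P}(E)}(1)^{\otimes k},h_{0}^{\otimes k}\bigr)$, whose leading term in $k$ is strictly positive because it is built from the positive form $\omega$; this again yields the positivity of $\Sym^{k}(E)$ for all $k$ sufficiently large.
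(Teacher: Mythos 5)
This theorem is not proved in the paper at all: it is quoted from Liu--Sun--Yang \cite{LSY}, so there is no in-paper proof to measure your argument against. Taken on its own terms, your argument is correct in outline and follows a genuinely different route from the one in \cite{LSY}. The decomposition $\mathcal{O}_{\mathbb{P}(E)}(k)\cong K_{\mathbb{P}(E)/X}\otimes A_k$ with $A_k=\mathcal{O}_{\mathbb{P}(E)}(k+r)\otimes\pi^{*}(\det E)^{-1}$, the ampleness of $A_k$ for $k\gg 0$, the identification $\pi_{*}(K_{\mathbb{P}(E)/X}\otimes A_k)\cong \Sym^{k}(E)$, and the smoothness of the fibrewise $L^{2}$-metric are all fine, and the existential quantifier on $k$ in the statement is exactly what tolerates the restriction to $k$ large. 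The one thing you present as unfinished (``where the genuine work lies'') is in fact a citable published result: the strictly positive case of Berndtsson's direct-image theorem (Ann.\ of Math.\ 169 (2009), Theorem 1.2 together with its application to ample bundles, which is precisely the statement that $\Sym^{k}(E)\otimes\det E$ carries a Nakano positive $L^{2}$-metric for every $k$). Your only new ingredient relative to that application is absorbing $\pi^{*}(\det E)^{-1}$ into the twisting line bundle at the price of taking $k$ large, which is harmless; so you should simply invoke Berndtsson's theorem rather than leave its strictness as a verification. By contrast, Liu--Sun--Yang prove the result by an explicit computation of the curvature of the induced metric (which also yields dual Nakano positivity, something Berndtsson's formula does not give directly), and the machinery this paper actually develops (Lemma \ref{PTprop} and Corollary \ref{keycor}) is the singular, merely semipositive analogue of your step: it pushes a semipositively curved \emph{singular} metric on $\mathcal{O}_{\mathbb{P}(E)}(m+r)\otimes\pi^{*}(A\otimes\det E^{*})$ down through $K_{\mathbb{P}(E)/X}$ via P\u{a}un--Takayama (Theorem \ref{pauntaka}) instead of Berndtsson. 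Your proof is the smooth, strictly positive incarnation of the same relative-canonical-bundle trick, and it buys a shorter argument at the cost of losing the dual Nakano statement of \cite{LSY}.
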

Throughout this paper, we will  denote by $\Sym^k(E)$ the $k$-th symmetric power of $E$ and 
denote by $ \mathbb{N}_{>0}$ the set of positive integers.
Inspired by the works of Liu, Sun, and Yang, we study notions of algebraic geometric positivity of vector bundles by using smooth and singular Hermitian metrics.
\begin{thm} 
\label{maintheorem}
Let $X$ be a smooth projective variety and $E$ be a holomorphic vector bundle on $X$.
\begin{enumerate}
\item $E$ is nef 
iff there exists an ample line bundle $A$ on $X$, such that $\Sym^k(E) \otimes A$ has a Griffiths semipositive smooth Hermitian metric for any $k \in \mathbb{N}_{>0}$.
\item $E$ is big \footnote{The definition of "big" in this paper is different from the definition of "big" in \cite{Laz2}.}
iff there exist an ample line bundle $A$ 
and $k \in \mathbb{N}_{>0}$, such that $\Sym^{k}(E) \otimes A^{-1}$ has a Griffiths semipositive singular Hermitian metric.
\item $E$ is pseudo-effective 
iff there exists an ample line bundle $A$, such that 
$\Sym^{k}(E) \otimes A$ has a Griffiths semipositive singular Hermitian metric for any $k \in \mathbb{N}_{>0}$.
\item $E$ is weakly positive iff
there exist an ample line bundle $A$ and a proper Zariski closed set $Z$, such that 
 $\Sym^{k}(E) \otimes A$ has a Griffiths semipositive singular Hermitian metric $h_k$ for any $k \in \mathbb{N}_{>0}$
 and the Lelong number of $h_k$ at x is less than 2 for any $x \in X \setminus Z$. 
\end{enumerate}
\end{thm}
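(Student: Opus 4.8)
The plan is to prove all four equivalences through the projective bundle $\pi\colon Y=\mathbb{P}(E)\to X$ with tautological line bundle $L=\mathcal{O}_{\mathbb{P}(E)}(1)$, using two elementary reductions that already dispose of the ``metric $\Rightarrow$ positivity'' implications. First, $E$ is nef (resp.\ big, resp.\ pseudo-effective) precisely when $L$ is, and $E$ is weakly positive precisely when $L$ is pseudo-effective with restricted base locus $\mathbf{B}_-(L)$ not dominating $X$ (cf.\ Nakayama, Viehweg). Second, along the Veronese embedding $\mathbb{P}(E)\hookrightarrow\mathbb{P}(\Sym^kE)$ the bundle $\mathcal{O}_{\mathbb{P}(\Sym^kE)}(1)$ restricts to $kL$, and since $\mathcal{O}_{\mathbb{P}(F)}(1)$ is a quotient of $\pi^*F$, a Griffiths semipositive smooth (resp.\ singular) metric on $\Sym^k(E)\otimes A^{\pm1}$ induces a smooth semipositive (resp.\ positive-current) metric on $kL\pm\pi^*A$. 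Granting these, the reverse implications in (1)--(3) follow: in (1) one gets $kL+\pi^*A$ nef for all $k$, so $L$ is nef since the nef cone is closed; in (3) one gets $kL+\pi^*A$ pseudo-effective for all $k$, so $L$ is pseudo-effective; in (2) one gets a positive current in the class $kL-\pi^*A$ with $A$ ample on $X$, whence $L$ is big by the Boucksom--Demailly--P\u{a}un--Peternell criterion --- for any nonzero movable curve class $\gamma$ on $Y$, either $\pi_*\gamma\ne0$, forcing $kL\cdot\gamma\ge\pi^*A\cdot\gamma=A\cdot\pi_*\gamma>0$, or $\pi_*\gamma=0$, so $\gamma$ is a positive multiple of a line in a fibre and $kL\cdot\gamma>0$; thus $L$ lies in the interior of the pseudo-effective cone. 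For (4), the Lelong-number hypothesis translates to a bound on the induced metric on $kL+\pi^*A$ over $\pi^{-1}(X\setminus Z)$, which together with pseudo-effectivity of $L$ keeps $\mathbf{B}_-(L)$ off $\pi^{-1}(X\setminus Z)$, giving $E$ weakly positive.

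For the forward implication in (1), $E$ nef makes every $\Sym^k(E)$ nef, so by the theorem of Demailly--Peternell--Schneider each $\Sym^k(E)$ admits a \emph{smooth} Hermitian metric with Griffiths curvature $\ge-\omega\otimes\mathrm{Id}$ for a fixed Hermitian form $\omega$; tensoring with a fixed ample $A$ carrying a smooth metric of curvature $\ge\omega$ (a high enough power of any ample line bundle) yields a smooth Griffiths semipositive metric on $\Sym^k(E)\otimes A$, the same $A$ serving for all $k$.

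The forward implications in (2), (3), (4) I would derive from a single direct-image construction. Writing $r=\operatorname{rk}E$, so $K_{Y/X}=\mathcal{O}_{\mathbb{P}(E)}(-r)\otimes\pi^*\det E$, one has
\[
\Sym^k(E)\otimes A^{\pm1}=\pi_*\big(K_{Y/X}\otimes G_k^{\pm}\big),\qquad G_k^{\pm}:=\mathcal{O}_{\mathbb{P}(E)}(k+r)\otimes\pi^*\!\big(A^{\pm1}\otimes(\det E)^{-1}\big).
\]
If $E$ is pseudo-effective then $L$ carries a singular metric $h_L$ with semipositive curvature current; choosing $A$ ample on $X$ with $A\otimes(\det E)^{-1}$ ample, the product of $h_L^{\,k+r}$ with a smooth positive metric on the pulled-back ample bundle is a semipositive-curvature singular metric on $G_k^{+}$. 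If $E$ is big then $L$ is big, so for $k\gg0$ the class $(k+r)L-\pi^*(A\otimes\det E)$ is big for any fixed ample $A$ (bigness is open), giving a semipositive-curvature singular metric on $G_k^{-}$. In both cases Berndtsson's positivity theorem for direct images, in its singular form (Berndtsson--P\u{a}un, P\u{a}un--Takayama), equips $\pi_*(K_{Y/X}\otimes G_k^{\pm})=\Sym^k(E)\otimes A^{\pm1}$ with a Griffiths semipositive singular Hermitian metric; a Fubini argument on a general fibre shows the $L^2$-metric obtained is finite and nonzero almost everywhere, hence a bona fide singular metric. This proves (3) and (2).

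Finally (4). In the forward direction, run the construction of (3) with $h_L$ chosen to have vanishing Lelong numbers over $\pi^{-1}(X\setminus Z)$ (possible because $\mathbf{B}_-(L)$ does not dominate $X$), and estimate the Lelong numbers of the Bergman-type metric $h_k$ produced by the direct image: a standard bound for the Lelong number of an $L^2$-metric in terms of the fibrewise Lelong numbers of the input yields the stated bound $<2$ over $X\setminus Z$. Conversely, given metrics $h_k$ as in the statement, one recovers $E$ pseudo-effective exactly as in (3), and the bound that the Lelong number of $h_k$ is $<2$ off $Z$ --- which, by Skoda's lemma, trivializes the relevant multiplier ideal there --- feeds, through Nadel vanishing and an Ohsawa--Takegoshi type extension after a further fixed ample twist, into global generation of $\Sym^k(E)\otimes A$ over $X\setminus Z$, i.e.\ Viehweg's weak positivity of $E$. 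The technical heart of the argument, and the step I expect to be the main obstacle, is this direct-image construction: producing from a positive-current metric on the \emph{line} bundle $L$ an honest Griffiths semipositive singular metric on the \emph{vector} bundle $\Sym^k(E)\otimes A^{\pm1}$ (needing both the singular Berndtsson theorem and the fibrewise integrability check), together with the quantitative two-way comparison of Lelong numbers in (4) that produces the explicit constant $2$.
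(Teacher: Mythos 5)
Your reduction ``$E$ is nef (resp.\ big, resp.\ pseudo-effective) precisely when $L=\mathcal{O}_{\mathbb{P}(E)}(1)$ is'' is where the argument breaks. It is true for nef, but false for pseudo-effective and big in the sense this paper uses (weak positivity at some point, i.e.\ generic global generation of $\Sym^{ab}(E)\otimes A^{b}$; V-bigness for ``big''). Pseudo-effectivity of $E$ is equivalent to $\mathcal{O}_{\mathbb{P}(E)}(1)$ being pseudo-effective \emph{and} $\pi\bigl(\mathbb{B}_{-}(\mathcal{O}_{\mathbb{P}(E)}(1))\bigr)\neq X$, and the second condition is not automatic. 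Concretely, for $E=\mathcal{O}_{\mathbb{P}^1}(1)\oplus\mathcal{O}_{\mathbb{P}^1}(-1)$ the line bundle $\mathcal{O}_{\mathbb{P}(E)}(1)$ is big on the Hirzebruch surface $\mathbb{P}(E)$ (its section ring grows quadratically), yet $E$ is neither big nor pseudo-effective, by the Cutkosky-type lemma in Section 4 (a direct sum is pseudo-effective iff every summand is). So your reverse implications in (2) and (3) --- producing a positive current in the class $kL\pm\pi^{*}A$ and invoking closedness of the nef/psef cone or the BDPP movable-curve criterion --- only prove positivity of the tautological \emph{line bundle} and stop short of the theorem. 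The paper's hard direction is exactly the missing step: an $L^{2}$-extension argument (via the injectivity theorem of Cao--Demailly--Matsumura, after Lemma \ref{PTprop} and Skoda's lemma) that extends sections from a fibre $\pi^{-1}(x)$ over a point where $\nu(\det h_{k},x)<2$, yielding global generation of $\Sym^{2ab}(E)\otimes H^{2b}$ \emph{at that point} $x$; Siu's theorem guarantees such points exist, which is why (3) carries no explicit Lelong hypothesis while (4) does. Your treatment of (4) inherits the same defect, since you again argue only at the level of $\mathbb{B}_{-}(L)$ without producing sections generating at a prescribed point.

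A second, smaller gap sits in your forward directions for (2)--(4): the singular direct-image theorem (Theorem \ref{pauntaka}) endows $\pi_{*}(K_{\mathbb{P}(E)/X}\otimes G_{k}^{\pm}\otimes\mathcal{J}(h))$ with a Griffiths semipositive metric, and passing to $\pi_{*}(K_{\mathbb{P}(E)/X}\otimes G_{k}^{\pm})=\Sym^{k}(E)\otimes A^{\pm1}$ requires the inclusion to be a generic isomorphism. Starting from an arbitrary semipositive singular metric on $\mathcal{O}_{\mathbb{P}(E)}(1)$ (or a big one in case (2)) this can fail: the singular locus may dominate $X$ and make $\mathcal{J}(h)$ nontrivial on every fibre, so the ``Fubini argument'' you defer to is not routine. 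The paper avoids this by first using weak positivity at a point to obtain global generation of $\Sym^{(m+r)b}(E)\otimes(A\otimes\det E^{*})^{b}$ there, building the metric on $\mathcal{O}_{\mathbb{P}(E)}(m+r)\otimes\pi^{*}(A\otimes\det E^{*})$ from those global sections (Lemma \ref{analytic}), which is smooth outside $\pi^{-1}(Z)$ for a proper Zariski closed $Z\subset X$ and hence satisfies the generic-isomorphism hypothesis. Your part (1) is essentially correct and is a legitimate variant of the paper's (using Demailly--Peternell--Schneider directly rather than direct images of ample line bundles), but the core of (2)--(4) needs to be rebuilt around extension of sections from fibres, not around positivity of the tautological class.
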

We will explain the definitions of big, pseudo-effective, 
and weakly positive in Section 2.
Further, we obtain similar results in the case of torsion-free coherent sheaves. 
We will discuss about torsion-free coherent sheaves in Section 5.

Nef, big, pseudo-effective, and weakly positive are notions of algebraic geometric positivity of vector bundles
and torsion-free coherent sheaves.
In particular, Viehweg \cite{Vie1} proved that 
a direct image sheaf of an $m$-th relative canonical line bundle $f_{*}(m K_{X/Y})$ is weakly positive
for any fibration
$f \colon X \rightarrow Y$.
By using this result, he studied Iitaka's conjecture.
A Griffiths semipositive singular Hermitian metric, which is an analogy of a singular Hermitian metric of a line bundle and
a Griffiths semipositive metric,
was investigated in many papers. 
By using Griffiths semipositive singular Hermitian metrics, 
Cao and P\u{a}un \cite{CP} proved Iitaka's conjecture 
when the base space is an Abelian variety. 
Therefore, our results 
also relate algebraic geometry to complex geometry.


We have the following corollary.
\begin{cor}
\label{big-RC}
Let $X$ be a smooth projective $n$-dimensional variety.
If the tangent bundle $T_X$ is big, then $X$ is biholomorphic to $\mathbb{P}^n$.
\end{cor}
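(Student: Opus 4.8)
The plan is to feed the metric characterization of bigness from Theorem~\ref{maintheorem}(2) into the theory of minimal rational curves and then invoke the Cho--Miyaoka--Shepherd-Barron characterization of $\mathbb{P}^n$. First I would use Theorem~\ref{maintheorem}(2) to fix an ample line bundle $A$ and $k\in\mathbb{N}_{>0}$ such that $F:=\Sym^{k}(T_X)\otimes A^{-1}$ carries a Griffiths semipositive singular Hermitian metric $h$. Passing to determinants, $\det F$ acquires a singular metric with semipositive curvature current, so $\det F$ is pseudo-effective; since $\det F=(-K_X)^{\otimes t}\otimes A^{-N}$ for positive integers $t,N$ (with $N=\operatorname{rk}\Sym^{k}(T_X)$), the class $t(-K_X)$ is numerically equivalent to $\det F+N A$, a sum of a pseudo-effective and an ample class, hence big. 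Thus $-K_X$ is big, and by the theorem of Q.~Zhang (and Hacon--McKernan) $X$ is rationally connected, in particular uniruled.

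Now I would run the standard minimal rational curve argument. Fix a covering family of minimal rational curves and let $f\colon\mathbb{P}^1\to X$ be a general member; its image $C$ passes through a general point of $X$, and $f^{*}T_X\cong\mathcal{O}(2)\oplus\mathcal{O}(1)^{\oplus(d-2)}\oplus\mathcal{O}^{\oplus(n-d+1)}$ with $d:=-K_X\cdot C$ satisfying $2\le d\le n+1$. Because $C$ is a general member of a covering family, it is not contained in the (pluripolar) unbounded locus of $h$, so $f^{*}h$ defines a Griffiths semipositive singular Hermitian metric on $f^{*}F=\Sym^{k}(f^{*}T_X)\otimes\mathcal{O}(-a)$, where $a:=A\cdot C>0$. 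On $\mathbb{P}^1$ this forces $f^{*}F$ to be nef: any quotient line bundle inherits a singular metric with semipositive curvature and so has nonnegative degree, and writing $f^{*}F=\bigoplus\mathcal{O}(b_i)$ realizes each $\mathcal{O}(b_i)$ as a quotient, so all $b_i\ge0$.

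To finish, suppose $d\le n$. Then $n-d+1\ge1$, so $f^{*}T_X$ has a trivial direct summand $\mathcal{O}$; hence $\Sym^{k}(f^{*}T_X)$ has $\mathcal{O}$ as a direct summand (take $\Sym^{0}$ of that factor), and therefore $\mathcal{O}(-a)$ is a direct summand — in particular a quotient — of $f^{*}F$ with $a>0$, contradicting nefness. So $d=n+1$, i.e.\ $f^{*}T_X\cong\mathcal{O}(2)\oplus\mathcal{O}(1)^{\oplus(n-1)}$ and $-K_X\cdot C=n+1$ for a minimal rational curve through a general point. The Cho--Miyaoka--Shepherd-Barron characterization of projective space then gives $X\cong\mathbb{P}^n$.

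The step I expect to be the main obstacle is justifying that the general minimal rational curve $C$ avoids the unbounded locus of $h$, so that $f^{*}h$ is a well-defined singular Hermitian metric and restriction preserves Griffiths semipositivity; this needs the fact that the unbounded locus of a Griffiths semipositive singular Hermitian metric of a vector bundle is small (pluripolar, hence cannot contain the general member of a covering family of curves) together with the behaviour of such metrics under pullback and under taking quotients — all of which should be available from the foundational material on singular Hermitian metrics developed earlier in the paper.
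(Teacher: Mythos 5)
Your overall strategy is sound and genuinely different from the paper's in its middle portion: you reduce to the splitting type of a general member of a minimal covering family and rule out trivial summands via nefness of $f^{*}(\Sym^{k}(T_X)\otimes A^{-1})$ on $\mathbb{P}^1$, whereas the paper shows directly that $T_X|_C$ is ample for \emph{every} rational curve $C$ through a suitable point, by equipping each quotient $G$ of $T_X|_C$ with a quotient metric induced from $h\,h_A$ and computing $\deg G>0$; both routes then invoke Cho--Miyaoka--Shepherd-Barron. Two side remarks: the assertion that $-K_X$ big implies rational connectedness via Zhang/Hacon--McKernan is miscited (those results require $-K_X$ big \emph{and nef}, or log Fano type), but you only need uniruledness, which does follow since $-K_X$ big forces $K_X$ to be non-pseudo-effective; and your determinant argument for bigness of $-K_X$ is fine.

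The genuine gap is exactly the step you flag: restricting $h$ to the general minimal rational curve. The paper does \emph{not} develop the facts you lean on there --- pluripolarity of the unbounded locus of a Griffiths semipositive singular Hermitian metric, and well-definedness of its restriction to curves not contained in that locus --- so as written this step is unsupported by ``foundational material developed earlier in the paper.'' Closing it from scratch would require Raufi-type results ($\log\det h^{*}$ plurisubharmonic, hence the polar set is pluripolar and cannot contain a general member of a covering family) together with a check that the restricted metric still satisfies $0<\det<\infty$ almost everywhere. The clean fix --- and the paper's actual device --- is to use the refined form of the bigness characterization, condition (B) of Corollary \ref{Vbig} (which comes from Lemma \ref{analytic} and Corollary \ref{keycor}): one may choose $h$ on $\Sym^{k}(T_X)\otimes A^{-1}$ \emph{smooth outside a proper Zariski closed set} $Z$. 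Then for any rational curve $C$ through a point $x\in X\setminus Z$, the restriction $h|_C$ is automatically a genuine Griffiths semipositive singular Hermitian metric because it is smooth near $x$, and your nefness argument (or the paper's ampleness argument) goes through. If you invoke only Theorem \ref{maintheorem}(2) as stated, the restriction step remains unjustified.
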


This corollary is a generalization of Mori's result: 
"If the tangent bundle $T_X$ is ample, then $X$ is biholomorphic to $\mathbb{P}^{n}$", since an ample vector bundle is big. 
This corollary was proved by Fulger and Murayama \cite[Corollary 7.8]{FM}
 by using Seshadri constants of vector bundles .
We give an another proof by using singular Hermitian metrics.
Finally, we investigate the relationships between augmented base loci and Griffiths semipositive singular Hermitian metrics of vector bundles.

{\bf Acknowledgment. } 
The author would like to thank his supervisor Prof. Shigeharu Takayama for helpful comments and enormous support.
He would like to thank Prof. Takayuki Koike for useful comments.
He also would like to thank anonymous referee for some suggestions about the contents in Chapter \ref{augmented}.
This work is supported by the Program for Leading Graduate Schools, MEXT, Japan. This work is also supported by JSPS KAKENHI Grant Number 17J04457.

\section{Definition and related results}

Let $X$ be a smooth projective variety.
A function $\varphi$ : $\hol{X}{ [ -\infty , +\infty) } $ is said to
be {\it quasi-plurisubharmonic} 
if $\varphi$ is locally the sum of a 
plurisubharmonic function and a smooth function.
Let $L$ be a line bundle on $X$.
Fix a smooth metric $h_0$ on $L$.
$h$ is a {\it singular Hermitian metric} if $h = h_{0} e^{- \varphi}$ for some quasi-plurisubharmonic function $\varphi$.

For any quasi-plurisubharmonic function $\varphi$ on $X$,
the{ \it multiplier ideal sheaf} $\mathcal{J}( e^{- \varphi}) $ is a coherent
subsheaf of $\mathcal{O}_X$ defined by
$$
\mathcal{J}( e^{- \varphi})_x \coloneqq
\{ f \in \mathcal{O}_{ X , x} ; \exists U \ni x , \int_{U} | f |^2 e^{ - \varphi} d \lambda < \infty \},
$$
where $U$ is an open coordinate neighborhood of $x$ and $d \lambda$ is the standard Lesbegue
measure in the corresponding open chart of $\mathbb{C}^n$.
The {\it Lelong number} $\nu(\varphi , x)$ at $x \in X$ is defined by
$$
\nu(\varphi , x) \coloneqq  \liminf_{z \rightarrow x} \frac{\varphi(z)}{ \log |z - x|}.
$$
We define the {\it multiplier ideal sheaf} $\mathcal{J}( h )$ of a singular Hermitian metric $h$ on $L$  by
$
\mathcal{J}( h ) \coloneqq \mathcal{J}(  e^{ \log(h h_{0}^{-1})})
$, 
and define the {\it Lelong number} $\nu(h, x)$ of $h$ at $x\in X$ by 
$
\nu(h, x) \coloneqq \nu( -\log(h h_{0}^{-1}) , x).
$
We point out that 
both $\mathcal{J}( h ) $ and $\nu(h, x)$  do not depend
on the choice of $h_{0}$.

We review the definitions of singular Hermitian metrics of vector bundles.
Let $H_r$ be the set of $r \times r$ semipositive definite Hermitian matrixs
and $\bar{H}_r$ be the space of semipositive, possibly unbounded Hermitian forms on
$\mathbb{C}^r$.

\begin{dfn} \cite[Definition 2.2.1 and 2.2.2]{PT} 
\begin{enumerate}
\item A {\it singular Hermitian metric} $h$ on $E$ is defined to be a locally measurable map with
values in $\bar{H}_r$ such that $0 < \det h < +\infty$ almost everywhere.
\item A singular Hermitian metric $h$ on $E$ is {\it Griffiths seminegative} 
({\it negatively curved})
if the function
$\log |u|^{2}_{h}$ is plurisubharmonic for any local section $u$ of $E$.
\item A singular Hermitian metric $h$ on $E$ is {\it Griffiths semipositive} 
({\it positively curved})
if the dual
singular Hermitian metric $h^{*} := {}^t\! h^{-1}$ on the dual vector bundle $E^{*}$ 
is Griffiths seminegative.
\end{enumerate}
\end{dfn}

These definitions are well-defined even if $E$ is a line bundle.
In particular, for any singular Hermitian metric $h$ on a line bundle $L$, 
$h$ is Griffiths semipositive iff $h$ has semipositive curvature current.
We recall the definition of a singular Hermitian metric on a torsion-free coherent sheaf.
Let $\mathcal{F} \neq 0$ be a torsion-free coherent  sheaf on $X$.
We will denote by $X_{\mathcal{F}}$ the maximal Zariski open set where $\mathcal{F}$ is locally free.

\begin{dfn} \cite[Definition 2.4.1]{PT} 
\begin{enumerate}
\item A {\it singular Hermitian metric} $h$ on $\mathcal{F}$ is 
a singular Hermitian metric on the vector bundle $\mathcal{F}|_{X_{ \mathcal{F} }}$.
\item A singular Hermitian metric $h$ on $\mathcal{F}$ is {\it Griffiths seminegative} 
({\it negatively curved})
if $h |_{X_{ \mathcal{F} } }$ is Griffiths seminegative.
\item A singular Hermitian metric $h$ on $\mathcal{F}$ is {\it Griffiths semipositive} 
({\it positively curved})
if there exists a Griffiths seminegative metric $g$ on $\mathcal{F}^{*}|_{X_{ \mathcal{F} }}$
such that $h |_{X_{ \mathcal{F} }} = (g |_{ X_{ \mathcal{F} } })^{*}$
\end{enumerate}
\end{dfn}
These are well-defined definitions (see \cite[Remark 2.4.2]{PT}). 
About a Griffiths semipositive singular Hermitian metric, 
P\u{a}un and Takayama proved the following Theorem.

\begin{thm}\cite[Theorem 1.1]{PT} \cite[Theorem 21.1 and Corollary 21.2]{HPS}
\label{pauntaka}
Let $f \colon X \rightarrow Y $ be a projective surjective morphism between connected complex manifolds
and $(L , h)$ be a holomorphic line bundle with a singular Hermitian metric with semipositive curvature current on $X$.
Then $f_{*}( K_{X/Y} \otimes L  \otimes \mathcal{J}(h))$ has a Griffiths semipositive singular Hermitian metric.

Moreover if the inclusion morphism
$$
f_{*}( K_{X/Y} \otimes L  \otimes \mathcal{J}(h)) \rightarrow  f_{*}( K_{X/Y} \otimes L )
$$
is generically isomorphism,
then 
$f_{*}( K_{X/Y} \otimes L  )$
also has a Griffiths semipositive singular Hermitian metric.
\end{thm}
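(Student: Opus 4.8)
The plan is to equip $f_*(K_{X/Y}\otimes L\otimes\mathcal{J}(h))$ with its natural $L^2$ (Hodge-type) metric and to show that metric is Griffiths semipositive via a regularization-and-limit argument built on Berndtsson's positivity theorem for direct images of adjoint bundles. Let $Y_0\subset Y$ be the Zariski open set over which $f$ is a smooth projective fibration, and write $h=h_0e^{-\varphi}$. Over $Y_0$, a local holomorphic section of the direct image is, after choosing a local trivialization of $K_Y$, a holomorphic family $(u_y)_{y}$ of $L$-valued relative canonical forms with $u_y$ lying fiberwise in $\mathcal{J}(h|_{X_y})$; set $\|u\|^2(y):=\int_{X_y}c_n\,u_y\wedge\overline{u_y}\,e^{-\varphi}$. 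The first step is to check this defines a genuine singular Hermitian metric: finiteness of $\|u\|^2$ on local sections, and positivity and finiteness of the determinant, follow from the Ohsawa--Takegoshi extension theorem (fiberwise $L^2$ sections extend, and sections of the sheaf are fiberwise $L^2$), together with the coherence and base-change statements already known for multiplier ideals in families.

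Next I would establish Griffiths semipositivity over $Y_0$. By Demailly's regularization theorem write $h=\lim_{\varepsilon\downarrow 0}h_\varepsilon$ with $h_\varepsilon$ smooth and $i\Theta_{h_\varepsilon}(L)\ge -\varepsilon\omega$ for a fixed \kah form $\omega$ on $X$; then the associated $L^2_{h_\varepsilon}$ metrics on $f_*(K_{X/Y}\otimes L)$ increase, and by monotone convergence their limit, on the locus where it is finite, is exactly the $L^2_h$ metric on the subsheaf $f_*(K_{X/Y}\otimes L\otimes\mathcal{J}(h))$ (this identification is where the strong openness property / Ohsawa--Takegoshi re-enters). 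For each fixed $\varepsilon$, after absorbing the curvature defect by twisting with $\varepsilon$ times an auxiliary ample line bundle, Berndtsson's theorem yields that the $L^2_{h_\varepsilon}$ metric is Nakano — in particular Griffiths — semipositive up to an error tending to $0$. Dualizing, the metrics $h_\varepsilon^\ast$ decrease and their potentials $\log|u|^2_{h_\varepsilon^\ast}$ are plurisubharmonic modulo an $\varepsilon$-correction; the decreasing limit is plurisubharmonic unless identically $-\infty$, and the determinant control from the first step rules that out. Hence the $L^2_h$ metric is Griffiths semipositive on $Y_0$. Since $\mathrm{codim}(Y\setminus Y_0)\ge 1$ and the dual potentials $\log|u|^2$ are locally bounded above, they extend plurisubharmonically across $Y\setminus Y_0$ by the standard removable-singularity theorem, giving the Griffiths semipositive singular Hermitian metric on all of $f_*(K_{X/Y}\otimes L\otimes\mathcal{J}(h))$.

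For the "moreover" part, suppose the inclusion $f_*(K_{X/Y}\otimes L\otimes\mathcal{J}(h))\hookrightarrow f_*(K_{X/Y}\otimes L)$ is generically an isomorphism. Then the two torsion-free sheaves coincide over a Zariski open dense $Y_1\subset Y$, so the metric constructed above already lives on $f_*(K_{X/Y}\otimes L)|_{Y_1}$. To extend it across the proper closed set $Y\setminus Y_1$, I would again invoke plurisubharmonic extension for the potentials of the seminegative dual metric, after verifying these potentials remain locally bounded above near $Y\setminus Y_1$ — this follows by comparing, on a neighborhood, with any fixed smooth Hermitian metric on the locally free sheaf $f_*(K_{X/Y}\otimes L)$ and using that the two sheaves agree generically. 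This produces the desired Griffiths semipositive singular Hermitian metric on $f_*(K_{X/Y}\otimes L)$.

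I expect the main obstacle to be Step two: matching the analytic limit $\lim_\varepsilon\|u\|^2_{h_\varepsilon}$ with the algebraically defined sheaf $f_*(K_{X/Y}\otimes L\otimes\mathcal{J}(h))$ — i.e.\ showing that the "finite locus" of the increasing family of $L^2$ metrics is precisely the multiplier-ideal-twisted direct image — and simultaneously controlling the behaviour over the critical values $Y\setminus Y_0$ of $f$ while the metric $h$ is being regularized. This coupling of Ohsawa--Takegoshi with optimal constants, Berndtsson positivity, and a careful analysis near the bad locus is the technical heart of the statement.
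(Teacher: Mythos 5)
This theorem is imported by the paper from \cite[Theorem 1.1]{PT} and \cite[Theorem 21.1 and Corollary 21.2]{HPS} without proof, so your attempt has to be measured against the argument in those references rather than against anything in this paper. The overall architecture you propose --- the fiberwise $L^2$ metric over the smooth locus $Y_0$, Ohsawa--Takegoshi to control finiteness and nondegeneracy, positivity of approximating metrics followed by a monotone limit, plurisubharmonic extension of the dual potentials across $Y\setminus Y_0$, and the generic-isomorphism extension for the ``moreover'' part --- has the right shape, and the last paragraph correctly locates where the difficulty sits.

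There is, however, a genuine gap at the regularization step. Demailly's regularization theorem does \emph{not} produce, for an arbitrary quasi-psh weight with semipositive curvature current, a decreasing family of \emph{smooth} metrics $h_\varepsilon$ with $i\Theta_{h_\varepsilon}(L)\ge-\varepsilon\omega$: the unavoidable loss of positivity in the smooth approximation is of order $\lambda_\varepsilon(x)\,\omega$ with $\lambda_\varepsilon(x)\to\nu(\varphi,x)$, so it can be made uniformly small only where the Lelong numbers of $h$ vanish; keeping the loss at $-\varepsilon\omega$ forces the approximants to retain (analytic) singularities, and then Berndtsson's theorem for smooth metrics no longer applies. (The situation is worse still here because $X$ is only assumed to be a connected complex manifold with a projective morphism to $Y$, not compact.) Consequently the chain ``regularize, apply Berndtsson for each $\varepsilon$, take a monotone limit'' does not go through as stated, and this is precisely why the cited proofs do not argue this way for general $h$: they show directly that the $L^2$ metric satisfies the \emph{minimal extension property} --- for each $y_0$ in a ball $B\subset Y_0$ and each vector $v$ in the fiber with $|v|_{y_0}=1$ there is a holomorphic section $s$ of the direct image with $s(y_0)=v$ and $\frac{1}{\mu(B)}\int_B|s|^2\le 1$ --- by applying the Ohsawa--Takegoshi theorem \emph{with optimal constant} to $f^{-1}(B)\to B$, and then invoke the formal lemma that a singular Hermitian metric with the minimal extension property is Griffiths semipositive. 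In your write-up the optimal-constant Ohsawa--Takegoshi theorem appears only as an anticipated technical obstacle; in fact it must replace, not merely supplement, the Berndtsson-plus-regularization step. The remaining points you flag (local boundedness above of $\log|u|^2_{h^{*}}$ near $Y\setminus Y_0$ and near the locus where the inclusion fails to be an isomorphism, followed by psh extension) are the right ones, but they too are carried out in the references via the same quantitative $L^2$ extension bound.
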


Finally we recall some notions of algebraic positivity of torsion-free coherent sheaves.

\begin{dfn}\cite{Vie1} \cite{Vie2} \cite{Nak}\cite{BDPP}
\label{definition_psef}
Let $X$ be a smooth projective variety and $\mathcal{F}$ be a torsion-free coherent sheaf on $X$.
\begin{enumerate}
\item $\mathcal{F}$ is {\it weakly positive at $x \in X$} if for any $a \in \mathbb{N}_{>0}$ and any ample line bundle $A$, there exists 
$b \in \mathbb{N}_{>0}$ such that $\widehat{\Sym}^{ab}( \mathcal{F}  ) \otimes A^{ b}$ is globally generated at $x$.
\item $\mathcal{F}$ is {\it pseudo-effective}
({\it weakly positive in the sense of Nakayama})
 if there exists $x \in X$ such that $ \mathcal{F}  $ is weakly positive at $x$.
\item $\mathcal{F}$ is {\it weakly positive} 
({\it weakly positive in the sense of Viehweg})
if there exists a Zariski open set  $U \subset X$ such that $ \mathcal{F}  $ is weakly positive at any $x \in U$.
\item $\mathcal{F}$ is {\it big} 
({\it big in the sense of Viehweg})
if there exist an ample line bundle $A$ and $a \in \mathbb{N}_{>0}$ such that $\widehat{\Sym}^{a}( \mathcal{F}  ) \otimes A^{ -1}$ is pseudo-effective. 

\end{enumerate}
\end{dfn}
We will  denote by $\widehat{\Sym}^{k}( \mathcal{F}  )$ 
the double dual of the $k$-th symmetric power of $ \mathcal{F} $.
These definitions are well-defined even if $\mathcal{F}$ is a line bundle.
\begin{lem}
Let $X$ be a smooth projective variety and $\mathcal{F}$ be a torsion-free coherent sheaf on $X$.
If $\mathcal{F}$ is big, then $\mathcal{F}$ is weakly positive.
\end{lem}
\begin{proof}
By the assumption, there exist $k \in \mathbb{N}_{>0}$ and an ample line bundle $A$ such that
$\widehat{\Sym}^{k}( \mathcal{F}  ) \otimes A^{ -1}$ is pseudo-effective.
We may assume that $A$ is very ample.
Then we can take $l \in \mathbb{N}_{>0}$ and $x \in X$
such that $\widehat{\Sym}^{kl}( \mathcal{F}  ) \otimes A^{1 -l}$
 is globally generated at $x$.
 Hence there exists a Zariski open set  $U \subset X$
 such that $\widehat{\Sym}^{kl}( \mathcal{F}  ) $ is globally generated on $U$.
 Therefore, for any $a \in \mathbb{N}_{>0}$, 
$\widehat{\Sym}^{akl}( \mathcal{F}  ) \otimes A$
is globally generated on $U$.
\end{proof}

For any vector bundle $E$, we obtain the following implication:

\begin{equation*}
\xymatrix@C=40pt@R=30pt{
  \txt{ample} \ar@{=>}[r]\ar@{=>}[d]& \txt{nef} \ar@{=>}[d] &
 \\ 
 \txt{big} \ar@{=>}[r] &  \txt{weakly positive} \ar@{=>}[r]& \txt{pseudo-effective.}
}
\end{equation*}


\section{A singular Hermitian metric on $\mathcal{O}_{ \mathbb{P}(E) }( 1 ) $ }

Throughout this paper, we will denote by $\pi \colon \mathbb{P}(E) \rightarrow X$ the projective bundle of rank one quotients of $E$,
and denote by $ \mathcal{O}_{ \mathbb{P}(E) }(1) $ the universal quotient of $\pi^{*} E$ on $\mathbb{P}(E)$.
We study a singular Hermitian metric on $\mathcal{O}_{ \mathbb{P}(E) }( 1 ) $
induced by a singular Hermitian metric on $E$.
\begin{lem}
\label{PTprop}
Let $X$ be a smooth projective $n$-dimensional variety,
$E$ be a holomorphic vector bundle of rank $r$ on $X$,
and $A$ be a line bundle on $X$.
Assume that there exists $m \in \mathbb{N}_{>0}$ such that 
$\Sym^m(E) \otimes A$ has a Griffiths semipositive singular Hermitian metric $h_m$.
Then $\mathcal{O}_{ \mathbb{P}(E) }(m) \otimes \pi^{*} A$ has a singular Hermitian metric $g_m$ with semipositive curvature current.

Moreover, for any $x \in X$, there exist an open set $V$ near $x$ and a positive constant $C_{V}$, such that 
$g_{m} \le C_{V} \pi^{*}(\det h_{m})$ on $\pi^{-1}(V)$.

\end{lem}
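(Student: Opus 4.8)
The plan is to take $g_m$ to be the metric that $\pi^{*}h_m$ induces on the line bundle $\mathcal{O}_{\mathbb{P}(E)}(m)\otimes\pi^{*}A$ through the tautological quotient, to deduce the curvature positivity from the functorial properties of Griffiths seminegative singular metrics, and to obtain the comparison with $\det h_m$ from one explicit computation in a trivialization. Concretely, I would start from the tautological surjection $\pi^{*}E\twoheadrightarrow\mathcal{O}_{\mathbb{P}(E)}(1)$; taking $m$-th symmetric powers and tensoring by $\pi^{*}A$ yields a surjection $\pi^{*}(\Sym^{m}(E)\otimes A)\twoheadrightarrow\mathcal{O}_{\mathbb{P}(E)}(m)\otimes\pi^{*}A$, whose dual is an inclusion of a sub-line-bundle
\[
\iota\colon\ \mathcal{O}_{\mathbb{P}(E)}(-m)\otimes\pi^{*}A^{*}\ \hookrightarrow\ \pi^{*}\bigl((\Sym^{m}(E)\otimes A)^{*}\bigr).
\]
I then define $g_m^{*}$ on $\mathcal{O}_{\mathbb{P}(E)}(-m)\otimes\pi^{*}A^{*}$ by $|t|^{2}_{g_m^{*}}:=|\iota(t)|^{2}_{\pi^{*}h_m^{*}}$ and set $g_m:=(g_m^{*})^{*}$. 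Since $h_m$ is Griffiths semipositive, $h_m^{*}$ is Griffiths seminegative; invoking that Griffiths seminegativity is preserved under pullback by the submersion $\pi$ and under restriction to a subbundle (standard properties of singular Hermitian metrics, from \cite{PT}, \cite{HPS}), the restriction $g_m^{*}$ of $\pi^{*}h_m^{*}$ along $\iota$ satisfies that $\log|t|^{2}_{g_m^{*}}$ is plurisubharmonic for every local holomorphic section $t$; for a line bundle this is exactly the statement that the dual metric $g_m$ has semipositive curvature current. The same plurisubharmonicity (plurisubharmonic functions being locally bounded above) shows $g_m^{*}$ is locally bounded, so $g_m$ is a genuine singular Hermitian metric, independent of the auxiliary choices.

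For the ``moreover'' part I would fix $x\in X$ and take $V\ni x$ to be a coordinate ball over which $E$ and $A$ are trivial. Choosing frames $e_{1},\dots,e_{r}$ of $E|_{V}$ and $a$ of $A|_{V}$, I get the frame $\{e^{\alpha}\otimes a\}_{|\alpha|=m}$ of $\Sym^{m}(E)\otimes A$; let $H=H(z)$ be the Hermitian matrix of $h_m$ in this frame, so $\det H=\det h_m$, and put $N:=\operatorname{rank}\Sym^{m}(E)$. On the affine fibre chart $\{\xi_{1}\neq 0\}$ of $\mathbb{P}(E)|_{V}$, where $\xi=e_{1}^{*}+w_{2}e_{2}^{*}+\dots+w_{r}e_{r}^{*}$ trivializes $\mathcal{O}_{\mathbb{P}(E)}(-1)$, the section $\xi^{\odot m}\otimes a^{*}$ trivializes $\mathcal{O}_{\mathbb{P}(E)}(-m)\otimes\pi^{*}A^{*}$, and $\iota(\xi^{\odot m}\otimes a^{*})=\xi^{\odot m}\otimes a^{*}$ viewed inside $\pi^{*}(\Sym^{m}(E)^{*}\otimes A^{*})$. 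Computing the norm of this vector for the dual metric $h_m^{*}$ and passing to the dual local frame gives
\[
g_m\ =\ \frac{1}{\,\mathbf v(w)^{\dagger}\,H(z)^{-1}\,\mathbf v(w)\,},\qquad \mathbf v(w)=\bigl(\kappa_{\alpha}\,w^{\alpha}\bigr)_{|\alpha|=m},
\]
with fixed nonzero combinatorial constants $\kappa_{\alpha}$; the entry indexed by $\alpha=(m,0,\dots,0)$ is the nonzero constant $\kappa_{(m,0,\dots,0)}$, so $|\mathbf v(w)|^{2}\ge c_{0}:=|\kappa_{(m,0,\dots,0)}|^{2}>0$ on the chart.

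To conclude the estimate I would use that Griffiths seminegativity of $h_m^{*}$ controls $H^{-1}$ from above locally: the entries of the matrix of $h_m^{*}$ in the frame dual to $\{e^{\alpha}\otimes a\}$ are bounded (Cauchy--Schwarz) by its diagonal entries $|(e^{\alpha}\otimes a)^{*}|^{2}_{h_m^{*}}$, whose logarithms are plurisubharmonic on $V$ and hence bounded above after shrinking $V$. Thus $H^{-1}\preceq C\cdot\mathrm{Id}$ on $V$, i.e. $\mu_{\min}(H)\ge C^{-1}$, so $\det H=\prod_{i}\mu_{i}(H)\ge \mu_{\max}(H)\,C^{-(N-1)}$, and therefore
\[
\mathbf v^{\dagger}H^{-1}\mathbf v\ \ge\ \frac{|\mathbf v|^{2}}{\mu_{\max}(H)}\ \ge\ \frac{c_{0}}{\mu_{\max}(H)}\ \ge\ \frac{c_{0}}{C^{N-1}\det H}.
\]
Hence, in the dual local frame on this chart, $g_m\le (C^{N-1}/c_{0})\,\pi^{*}(\det h_m)$; the other standard fibre charts are symmetric, and taking the largest of the constants gives $g_m\le C_{V}\,\pi^{*}(\det h_m)$ on $\pi^{-1}(V)$.

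The routine ingredients here are the identification of the tautological sequence and the bilinear-algebra computation of the dual metric. The two points needing genuine care are (i) the appeal to functoriality of Griffiths seminegative singular metrics under pullback and restriction, which I would take from \cite{PT}, \cite{HPS} rather than reprove (the pullback statement is the delicate one, since separate plurisubharmonicity in base and fibre directions is not by itself enough), and (ii) recognizing that Griffiths seminegativity of $h_m^{*}$ is precisely the hypothesis that furnishes a uniform local lower bound $\mu_{\min}(H)\ge C^{-1}$, which is what turns the elementary eigenvalue inequality $\det H\ge C^{-(N-1)}\mu_{\max}(H)$ into the desired comparison. I expect (i) to be the main conceptual obstacle and (ii) the main computational one.
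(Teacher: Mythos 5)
Your proposal is correct, but it reaches the conclusion by a different route than the paper. The paper's proof is a two-line reduction: it identifies $\mathcal{O}_{\mathbb{P}(E)}(m)\otimes\pi^{*}A$ with $\mu_m^{*}\mathcal{O}_{\mathbb{P}(\Sym^m(E)\otimes A)}(1)$ via the $m$-th Veronese embedding $\mu_m\colon\mathbb{P}(E)\hookrightarrow\mathbb{P}(\Sym^m(E)\otimes A)$ and then cites \cite[Proposition 2.3.5]{PT} for both assertions at once --- the existence of a semipositively curved quotient metric on $\mathcal{O}_{\mathbb{P}(F)}(1)$ attached to a Griffiths semipositive $(F,h)$, and the local bound of that metric by $C\,\pi^{*}\det h$. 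What you do is open that black box and work directly on $\mathbb{P}(E)$: your metric is the same one (the $m$-th symmetric power of the tautological quotient is precisely what $\mu_m$ classifies, so your $g_m$ coincides with the Veronese pullback of P\u{a}un--Takayama's metric), and your two steps --- plurisubharmonicity of $\log|t|^{2}_{g_m^{*}}$ via functoriality of Griffiths seminegativity under pullback and restriction to a sub-line-bundle, and the eigenvalue chain $\mu_{\min}(H)\ge C^{-1}$ (from local upper bounds on the entries of the seminegative dual matrix), hence $\mu_{\max}(H)\le C^{N-1}\det H$ and $\mathbf v^{\dagger}H^{-1}\mathbf v\ge c_{0}/(C^{N-1}\det H)$ --- reprove the content of \cite[Proposition 2.3.5]{PT} rather than quote it. Both arguments are sound; yours is more self-contained and makes explicit where the shrinking of $V$ and the constant $C_V$ come from (the uniform lower bound $c_0$ on each affine fibre chart, coming from the coordinate of $\mathbf v$ that is a nonzero constant, is the point that makes the estimate hold on the whole non-compact chart), while the paper's is shorter but rests entirely on the citation. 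The one ingredient you decline to reprove --- that the pullback of a Griffiths seminegative singular metric is Griffiths seminegative for \emph{all} local holomorphic sections of the pulled-back bundle, not merely for pulled-back sections --- is indeed the genuinely delicate point, and it is the same ingredient underlying the proof of \cite[Proposition 2.3.5]{PT}, so citing it is legitimate.
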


\begin{proof}
We will denote by 
$\pi_{m} \colon  \mathbb{P}( \Sym^m(E)\otimes A  ) \rightarrow X$.
Notice that $ \mathbb{P}( \Sym^m(E) )  = \mathbb{P}( \Sym^m(E) \otimes A ) $ and 
$ \mathcal{O}_{ \mathbb{P}( \Sym^m(E) ) }(1 ) \otimes \pi_{m}^{*}(A) =  \mathcal{O}_{ \mathbb{P}( \Sym^m(E) \otimes A ) }(1)$.
Let $ \mu_{m} \colon \mathbb{P}( E )   \rightarrow \mathbb{P}( \Sym^m(E) )  $ be a standard 
$m$-th Veronese embedding.
Then $ \pi = \pi_{m} \circ \mu_{m}$ and 
$$
\mathcal{O}_{ \mathbb{P}(E) }(m) \otimes \pi^{*} A
  = 
 \mu_{m}^{*} (\mathcal{O}_{ \mathbb{P}( \Sym^m(E) ) }(1 )) \otimes \pi^{*}(A)
 =
 \mu_{m}^{*} ( \mathcal{O}_{ \mathbb{P}( \Sym^m(E) \otimes A ) }(1) ) .
$$

By \cite[Proposition 2.3.5]{PT}, 
$\mathcal{O}_{ \mathbb{P}( \Sym^m(E) \otimes A ) }(1)$ admits a singular Hermitian metric $\widetilde{g_m}$ with semipositive curvature current. 
Therefore $g_m \coloneqq \pi_{m}^{*} \widetilde{g_m}$ is a singular Hermitian metric with semipositive curvature current on
$ \mathcal{O}_{ \mathbb{P}(E) }(m) \otimes \pi^{*} A$.

Fix $x \in X$.
Since $\pi_{m}^{-1}(x)$ is compact, 
there exist an open set $V$ near $x$ and a positive constant $C_{V}$ such that 
$\widetilde{ g_{m}} \le C_{V} \pi_{m}^{*}(\det h_{m})$ on $\pi^{-1}_{m}(V)$
by \cite[Proposition 2.3.5]{PT}.
Therefore, we have $g_{m} \le C_{V} \pi^{*}(\det h_{m})$ on $\pi^{-1}(V)$.
\end{proof}

\begin{lem}
\label{analytic}
Let $X$ be a smooth projective $n$-dimensional variety, $E$ be a holomorphic vector bundle of rank $r$ on $X$, and 
$A$ be a line bundle on $X$.
Assume that there exist $m \in \mathbb{N}_{>0}$ and $x \in X$ such that $\Sym^{m}(E) \otimes A$ is globally generated at $x$.
Then there exist a singular Hermitian metric $g$ with semipositive curvature current on $\mathcal{O}_{ \mathbb{P}(E) }(m) \otimes \pi^{*} A$ and a proper Zariski 
closed set $Z \subset X$, such that $g$ is smooth outside $\pi^{-1} (Z)$.

Moreover if there exists a Zariski open set $U \subset X$ such that  $\Sym^{m}(E) \otimes A$ is globally generated 
on $U$, we can take $Z$ such that $ Z \cap U = \varnothing$.
\end{lem}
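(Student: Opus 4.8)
The plan is to produce $g$ as a Fubini--Study type singular metric built from the global sections of $L\coloneqq\mathcal{O}_{\mathbb{P}(E)}(m)\otimes\pi^{*}A$, and then to identify the locus where $g$ fails to be smooth with the pullback of a Zariski closed subset of $X$ under $\pi$.

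The first (and only genuinely non-formal) point is the translation of the hypothesis. Using the canonical identification $\pi_{*}L=\Sym^{m}(E)\otimes A$ one gets $H^{0}(\mathbb{P}(E),L)=H^{0}(X,\Sym^{m}(E)\otimes A)$, which I denote by $W$, together with the natural surjection $\pi^{*}(\Sym^{m}(E)\otimes A)\twoheadrightarrow L$ obtained by applying $\Sym^{m}(\cdot)\otimes\pi^{*}A$ to the universal quotient $\pi^{*}E\twoheadrightarrow\mathcal{O}_{\mathbb{P}(E)}(1)$. At a point $p\in\pi^{-1}(y)$, corresponding to a rank one quotient $q\colon E_{y}\twoheadrightarrow\ell$, the fibre of this surjection is the induced quotient $\rho_{p}\colon\Sym^{m}(E_{y})\otimes A_{y}\twoheadrightarrow\ell^{\otimes m}\otimes A_{y}=L_{p}$, and for $s\in W$ (viewed as a section of $L$) one has $s(p)=\rho_{p}(s(y))$, where $s(y)$ is the value of the corresponding section of $\Sym^{m}(E)\otimes A$ in the fibre at $y$. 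Hence, if $\Sym^{m}(E)\otimes A$ is globally generated at $y$, so that $\{\,s(y):s\in W\,\}$ is the whole fibre over $y$, then $\{\,s(p):s\in W\,\}=\rho_{p}(\Sym^{m}(E_{y})\otimes A_{y})=L_{p}\neq 0$ for every $p\in\pi^{-1}(y)$; that is, the sections in $W$ have no common zero on $\pi^{-1}(y)$.

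Next I would carry out the standard construction. Choose a basis $s_{0},\dots,s_{N}$ of $W$, regarded as sections of $L$, and let $B\coloneqq\bigcap_{j}\{s_{j}=0\}\subset\mathbb{P}(E)$ be the base locus of $|L|$, a Zariski closed set. Over a small open set $\Omega\subset\mathbb{P}(E)$ with a holomorphic frame $e$ of $L$, write $s_{j}=f_{j}e$ with $f_{j}\in\mathcal{O}(\Omega)$ and put the local weight $\varphi_{\Omega}\coloneqq\log\sum_{j}|f_{j}|^{2}$. Since the $f_{j}$ transform by the transition functions of $L$, these weights glue to a singular Hermitian metric $g$ on $L$; each $\varphi_{\Omega}$ is plurisubharmonic, so $g$ has semipositive curvature current. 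On $\Omega\setminus B$ the function $\sum_{j}|f_{j}|^{2}$ is strictly positive and real-analytic, hence $\varphi_{\Omega}$, and therefore $g$, is smooth there; so $g$ is smooth on $\mathbb{P}(E)\setminus B$.

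Finally, by the hypothesis and the second paragraph, $\pi^{-1}(x)\cap B=\varnothing$. Since $\pi\colon\mathbb{P}(E)\to X$ is proper, $Z\coloneqq\pi(B)$ is a Zariski closed subset of $X$ with $x\notin Z$, hence proper, and $B\subset\pi^{-1}(Z)$, so $g$ is smooth outside $\pi^{-1}(Z)$. For the last assertion, if $\Sym^{m}(E)\otimes A$ is globally generated at every point of a Zariski open $U$, the same argument gives $\pi^{-1}(U)\cap B=\varnothing$, hence $B\subset\pi^{-1}(X\setminus U)$ and $Z=\pi(B)\subset X\setminus U$, i.e.\ $Z\cap U=\varnothing$. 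I expect the main (indeed only) obstacle to be getting the first step right: one must be careful that global generation of $\Sym^{m}(E)\otimes A$ at $x$ gives base-point-freeness of $|L|$ along $\pi^{-1}(x)$ (the converse is false, but is not needed); the rest is the usual metric-from-sections construction together with properness of $\pi$.
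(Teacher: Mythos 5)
Your proposal is correct and follows essentially the same route as the paper: both construct the Fubini--Study type singular metric attached to the global sections of $\mathcal{O}_{\mathbb{P}(E)}(m)\otimes\pi^{*}A$ coming from $H^{0}(X,\Sym^{m}(E)\otimes A)$, observe that it has semipositive curvature and is smooth off the common zero set of those sections, and note that global generation at $x$ forces that zero set to miss $\pi^{-1}(x)$. The only cosmetic difference is that you locate the smooth locus via the universal quotient surjection $\pi^{*}(\Sym^{m}(E)\otimes A)\twoheadrightarrow\mathcal{O}_{\mathbb{P}(E)}(m)\otimes\pi^{*}A$ and properness of $\pi$, whereas the paper does the same verification in explicit local coordinates through the rank of the coefficient matrix of the sections.
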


\begin{proof}
Let $\{ U_i \}$ be a finite open cover of $X$ such that $U_i$ is a coordinate neighborhood
and $\pi^{-1} (U_i)$ is biholomorphic to  $ U_i \times \mathbb{P}^{r-1}$.
We take a local holomorphic frame $e_{1}, \dots ,e_{r}$ of $E$ on $U_i$
and a local holomorphic frame $e_A$ of $A$ on $U_i$.
Let $s_1, \dots ,s_N$ be a basis on $H^0( X ,  \Sym^m (E) \otimes A)$.
Set $M \coloneqq \binom{m + r -1}{r}$ and  
$$s_{j} = \displaystyle 
\sum_{\alpha } 
f_{j \alpha} e_{1}^{\alpha_1} \cdots e_{r}^{\alpha_r} e_A ,
$$
where $f_{j\alpha}$ are holomorphic function on $U_{i}$ and the sum
is taken over $\alpha = (\alpha_1 , \cdots ,\alpha_r) \in \mathbb{N}_{>0}^{r}$ with $\alpha_1 + \cdots + \alpha_r = m$.
The $N \times M$ matrix $B^{(i)} $ is defined by
$
B^{(i)} 
= 
(f_{j \alpha}).
$
Set $Z_{i} \coloneqq \{ z \in U_i \colon \text{rank } B^{(i)}(z) < M \} $ and $Z \coloneqq \cup Z_i$.
Since $\Sym^{m}(E) \otimes A$ is globally generated at $x$, we have $N \ge M$ and $Z $ is a proper Zariski closed set of $X$.
We define the singular Hermitian metric $g$ with semipositive curvature current on $\mathcal{O}_{ \mathbb{P}(E) }(m) \otimes \pi^{*} A$,
induced by the global sections $\pi^{*}(s_1), \cdots, \pi^{*}(s_N)  \in H^{0}( \mathbb{P}(E) , \mathcal{O}_{ \mathbb{P}(E) }(m) \otimes \pi^{*} A )$ (see \cite[Example 3.14]{Dem}).

We show that $g$ is smooth outside $\pi^{-1}(Z)$.
We will denote by $e_{1}^{*}, \dots ,e_{r}^{*}$ the dual frame on $E^{*}$.
The corresponding holomorphic coordinate on $E^{*}$ are denoted by $(W_1, \cdots, W_r)$.
We may regard $\pi^{-1} (U_i)$ as $ U_i \times \mathbb{P}^{r-1}$.
We take the chart $\{ [W_1 : \cdots :W_r ] \in \mathbb{P}^{r-1} \colon W_r \neq  0\}$.
We will define the isomorphism by 
$$
\begin{array}{ccc}
U_i \times \{ W_r \neq  0\}   & \rightarrow & U_i \times \mathbb{C}^{r-1} \\
(z , [W_1 : \cdots : W_r])  & \rightarrow  & (z , \frac{W_1}{W_r} , \cdots , \frac{W_{r-1}}{W_r})
\end{array}
$$
and we regard $U_i \times \{ W_r \neq  0\} $  as $ U_i \times \mathbb{C}^{r-1} $.
Put $\eta_{l} \coloneqq \frac{W_{l}}{W_r} $ for $1 \le l \le r-1$ and $\eta_{r} \coloneqq 1$.
In this setting, we have
$$
 \mathcal{O}_{ \mathbb{P}(E) }(-1) |_{ U_i \times \mathbb{C}^{r-1} } = 
 \{ (z ,  \eta , \xi) \in U_i \times \mathbb{C}^{r-1} \times \mathbb{C}^r
 \colon
 \eta_{i} \xi_{j} = \eta_{j} \xi_{i} \}
$$
and the local section 
$$
e_{ \mathcal{O}_{ \mathbb{P}(E) }(-1) } ( z , (\eta_1 , \cdots , \eta_{r-1 } ))
=
(z , (\eta_1 , \cdots, \eta_{r-1 } ) ,(\eta_1 , \cdots , \eta_{r-1 }  , 1) ).
$$

The local section $e_{ \mathcal{O}_{ \mathbb{P}(E) }(1)  }  $ of $\mathbb{P}(E) (1)$ is defined by the dual of $e_{ \mathcal{O}_{ \mathbb{P}(E) }(-1)  }  $.
Then we have
$$
\pi^{*}(s_{j}) |_{ U_i \times \mathbb{C}^{r-1} } =\sum_{\alpha} f_{j \alpha} (z)\eta_{1}^{\alpha_1} \cdots \eta_{r-1}^{\alpha_{r-1}} 1^{\alpha_{r}} e_{ \mathcal{O}_{ \mathbb{P}(E) }(1)  } ^{m} \pi^{*}(e_A)
$$
by using the isomorphism 
$H^{0}( X , \Sym^m(E) \otimes A) \simeq H^{0}(\mathbb{P}(E) ,  \mathcal{O}_{ \mathbb{P}(E) }(m) \otimes \pi^{*}(A) )$.

Since $g$ is defined by $1 / ( \sum_{1 \le j \le N} | \pi^{*}(s_{j})|^2  )$,
$g$ is described on $U_i \times \mathbb{C}^{r-1}$
by
$$
g(z,\eta_{1}, \cdots, \eta_{r-1}) = 
\Bigl( \sum_{1 \le j \le N}|\sum_{\alpha} f_{j \alpha} (z)\eta_{1}^{\alpha_1} \cdots \eta_{r-1}^{\alpha_{r-1}} 1^{\alpha_{r}} |^2 \Bigr)^{-1}.
$$
Therefore it is enough to show that $g^{-1} (z , \eta_{1} , \cdots, \eta_{r-1}) \neq 0$ for any
$(z , \eta_{1}, \cdots, \eta_{r-1}) \in (U_i \setminus W) \times \mathbb{C}^{r-1}$.
It is easily to check by the definition of $Z$ and the standard linear algebra.

The second statement is also easily proved by the definition of $Z$.
\end{proof}

\begin{cor}
\label{keycor}
Let $X$ be a smooth projective $n$-dimensional variety, $E$ be a holomorphic vector bundle of rank $r$ on $X$, and 
$A$ be a line bundle on $X$.
Assume that there exist $m , b \in \mathbb{N}_{>0}$ and $x \in X$ such that 
$\Sym^{(m+r)b}(E) \otimes (A \otimes \det E^{*})^{b}$ is globally generated at $x$.
Then there exist a Griffiths semipositive singular Hermitian metric $h$ on $\Sym^{m }(E) \otimes A $ and a proper Zariski 
closed set $Z \subset X$ such that $h$ is smooth outside $Z$.

Moreover if there exists a Zariski open set $U \subset X$ such that $\Sym^{(m+r)b}(E) \otimes (A \otimes \det E^{*})^{b}$ is globally generated on $U$, 
we can take $Z$ such that $Z \cap U = \varnothing$.
\end{cor}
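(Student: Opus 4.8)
\emph{Proof proposal.}
The plan is to realize $\Sym^{m}(E)\otimes A$ as a twisted relative canonical direct image on $\mathbb{P}(E)$ and to apply Theorem~\ref{pauntaka}, feeding it a singular metric on an auxiliary line bundle that is produced from the hypothesis by Lemma~\ref{analytic}. Recall that on the projective bundle $\pi\colon\mathbb{P}(E)\to X$ of rank-one quotients one has $K_{\mathbb{P}(E)/X}\cong\mathcal{O}_{\mathbb{P}(E)}(-r)\otimes\pi^{*}(\det E)$. So if we set
$$
L\coloneqq\mathcal{O}_{\mathbb{P}(E)}(m+r)\otimes\pi^{*}(A\otimes\det E^{*}),
$$
then $K_{\mathbb{P}(E)/X}\otimes L\cong\mathcal{O}_{\mathbb{P}(E)}(m)\otimes\pi^{*}A$, hence $\pi_{*}(K_{\mathbb{P}(E)/X}\otimes L)\cong\Sym^{m}(E)\otimes A$ by the projection formula, while $\pi_{*}(L^{b})\cong\Sym^{(m+r)b}(E)\otimes(A\otimes\det E^{*})^{b}$. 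This last identity is exactly why the hypothesis is stated in terms of $\Sym^{(m+r)b}(E)\otimes(A\otimes\det E^{*})^{b}$: applying Lemma~\ref{analytic} with its triple $(E,m,A)$ replaced by $(E,(m+r)b,(A\otimes\det E^{*})^{b})$, global generation at $x$ yields a singular Hermitian metric $g$ with semipositive curvature current on $\mathcal{O}_{\mathbb{P}(E)}((m+r)b)\otimes\pi^{*}((A\otimes\det E^{*})^{b})=L^{b}$, smooth outside $\pi^{-1}(Z)$ for a proper Zariski closed set $Z\subset X$; in the situation of the last sentence of the Corollary, the ``moreover'' part of Lemma~\ref{analytic} lets us take $Z$ with $Z\cap U=\varnothing$.

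Next I would set $h_{L}\coloneqq g^{1/b}$, a singular Hermitian metric on $L$ with semipositive curvature current which is smooth on $\mathbb{P}(E)\setminus\pi^{-1}(Z)$; in particular $\mathcal{J}(h_{L})=\mathcal{O}_{\mathbb{P}(E)}$ there, so $\mathcal{O}_{\mathbb{P}(E)}/\mathcal{J}(h_{L})$ is supported in $\pi^{-1}(Z)$. Applying Theorem~\ref{pauntaka} to $\pi$ and $(L,h_{L})$ gives a Griffiths semipositive singular Hermitian metric on $\pi_{*}(K_{\mathbb{P}(E)/X}\otimes L\otimes\mathcal{J}(h_{L}))$, and because $\mathcal{J}(h_{L})$ is trivial over $\pi^{-1}(X\setminus Z)$, the inclusion
$$
\pi_{*}(K_{\mathbb{P}(E)/X}\otimes L\otimes\mathcal{J}(h_{L}))\hookrightarrow\pi_{*}(K_{\mathbb{P}(E)/X}\otimes L)\cong\Sym^{m}(E)\otimes A
$$
is an isomorphism over $X\setminus Z$, hence generically an isomorphism. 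The ``moreover'' part of Theorem~\ref{pauntaka} then produces a Griffiths semipositive singular Hermitian metric $h$ on $\Sym^{m}(E)\otimes A$.

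It remains to see that $h$ can be chosen smooth outside $Z$. Over $X\setminus Z$ the morphism $\pi$ is smooth and proper with fiber $\mathbb{P}^{r-1}$, the metric $h_{L}$ is smooth with semipositive curvature, and $\mathcal{J}(h_{L})=\mathcal{O}$; over this locus the metric produced by Theorem~\ref{pauntaka} coincides with the fiberwise $L^{2}$ (Bergman-type) metric on $\pi_{*}(\mathcal{O}_{\mathbb{P}(E)}(m)\otimes\pi^{*}A)$, and that metric is smooth because the fiberwise spaces $H^{0}(\mathbb{P}(E_{y}),\mathcal{O}(m))\cong\Sym^{m}(E_{y})$ have constant dimension. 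This gives the smoothness statement, and when a global generation locus $U$ is present we have already arranged $Z\cap U=\varnothing$.

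I expect the main obstacle to be precisely this last step. Theorem~\ref{pauntaka} is packaged as a pure existence statement, so to obtain \emph{smoothness} of $h$ off $Z$ one must open up its proof and identify the metric it constructs, over the locus where $(L,h_{L})$ is smooth and the multiplier ideal is trivial, with the classical Bergman/$L^{2}$-metric on the direct image (which is then smooth because the relevant $h^{0}$ is locally constant). By contrast, the line-bundle bookkeeping --- the formula for $K_{\mathbb{P}(E)/X}$ and the coincidence of $\pi_{*}(L^{b})$ with the sheaf appearing in the hypothesis --- and the passage $g\mapsto g^{1/b}$ are routine.
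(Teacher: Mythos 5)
Your proposal is correct and follows essentially the same route as the paper: apply Lemma~\ref{analytic} to $\mathcal{O}_{\mathbb{P}(E)}((m+r)b)\otimes\pi^{*}((A\otimes\det E^{*})^{b})$, take the $b$-th root, use $\Sym^{m}(E)\otimes A\simeq\pi_{*}(K_{\mathbb{P}(E)/X}\otimes\mathcal{O}_{\mathbb{P}(E)}(m+r)\otimes\pi^{*}(A\otimes\det E^{*}))$ together with the generic isomorphism of the multiplier-ideal inclusion, and invoke Theorem~\ref{pauntaka}. The one step you flag as delicate --- that the direct-image metric is smooth off $Z$ because it agrees there with the fiberwise $L^{2}$ metric --- is exactly what the paper disposes of by citing \cite[Chapter 22]{HPS}, so your account matches the intended argument.
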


\begin{proof}
By Lemma \ref{analytic} and dividing by $b$,
there exist a singular Hermitian metric $g$ with semipositive curvature current on 
$\mathcal{O}_{ \mathbb{P}(E) }(m + r) \otimes \pi^{*} ( A \otimes \det E^{*} )$ and a proper Zariski 
closed set $Z \subset X$ such that $g$ is smooth outside $\pi^{-1} (Z)$.
From $\det E \simeq \pi_{*}( K_{\mathbb{P}(E) / X} \otimes  \mathcal{O}_{ \mathbb{P}(E) }(r) )$,
we have 
$$
\Sym^{m }(E) \otimes A
\simeq
\pi_{*}( K_{\mathbb{P}(E) / X} \otimes  \mathcal{O}_{ \mathbb{P}(E) }( m + r) \otimes \pi^{*} ( A \otimes \det E^{*} ) )
$$
and the inclusion morphism
$$
\pi_{*}( K_{\mathbb{P}(E) / X} \otimes  \mathcal{O}_{ \mathbb{P}(E) }( m +r ) \otimes \pi^{*} ( A \otimes \det E^{*} ) \otimes \mathcal{J}(g))
\rightarrow
\pi_{*}( K_{\mathbb{P}(E) / X} \otimes  \mathcal{O}_{ \mathbb{P}(E) }( m +r) \otimes \pi^{*} ( A \otimes \det E^{*} ) )
$$
is generically isomorphism.
By Theorem \ref{pauntaka}, 
$\Sym^m(E) \otimes A$ has a 
Griffiths semipositive singular Hermitian metric $h$ such that $h$ is smooth outside $Z$ (see \cite[Chapter 22]{HPS}).

The proof of the second statement is same as above.
\end{proof}

\section{Proof of Theorem \ref{maintheorem} and Corollary \ref{big-RC} }
In this section, we prove Theorem \ref{maintheorem}.
First, we study a pseudo-effective vector bundle .
\begin{thm}
\label{weaklypositive}
Let $X$ be a smooth projective $n$-dimensional variety and $E$ be a holomorphic vector bundle of rank $r$ on $X$.
The followings are equivalent.
\begin{enumerate}
\item[(A)] $E$ is pseudo-effective. 


\item[(B)]  There exists an ample line bundle $A$ such that $\Sym^{k}(E) \otimes A$ has 
a Griffiths semipositive singular Hermitian metric $h_k$ for any $k \in \mathbb{N}_{>0}$.
Moreover, for any $k \in \mathbb{N}_{>0}$, there exists a proper Zariski closed set $Z_k \subset X$ 
such that $h_k$ is smooth outside $Z_k$.


\item[(C)]  There exists an ample line bundle $A$ such that $\Sym^{k}(E) \otimes A$ has 
a Griffiths semipositive singular Hermitian metric $h_k$ for any $k \in \mathbb{N}_{>0}$.

\end{enumerate}
Moreover if $E$ satisfies the condition $($C$)$, then $E$ is weakly positive at any
$x \in X \setminus \cup_{k \in \mathbb{N}_{>0}} \{ z \in X \colon \nu( \det h_{k} , z ) \ge 2) \} $.
\end{thm}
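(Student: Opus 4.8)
The plan is to establish the cycle $(B)\Rightarrow(C)\Rightarrow(A)\Rightarrow(B)$, deducing $(C)\Rightarrow(A)$ from the ``moreover'' clause, which is where the real work lies; $(B)\Rightarrow(C)$ is trivial (drop the smoothness of $h_k$ outside $Z_k$). For $(A)\Rightarrow(B)$, pseudo-effectivity of $E$ gives a point $x_0$ at which $E$ is weakly positive; fix an ample line bundle $L_0$ on $X$ and $n_0\gg0$ so that both $A:=L_0^{n_0}\otimes\det E$ and $A\otimes\det E^{*}=L_0^{n_0}$ are ample. For each $k\in\mathbb N_{>0}$, weak positivity of $E$ at $x_0$ applied with $a=k+r$ and the ample line bundle $L_0^{n_0}$ yields $b$ with $\Sym^{(k+r)b}(E)\otimes(A\otimes\det E^{*})^{b}$ globally generated at $x_0$, so Corollary~\ref{keycor} (with $m=k$) produces a Griffiths semipositive singular Hermitian metric $h_k$ on $\Sym^{k}(E)\otimes A$ that is smooth outside a proper Zariski closed set $Z_k$; since $A$ is independent of $k$, this is exactly $(B)$.

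Next I reduce $(C)\Rightarrow(A)$ to the final assertion. Assuming $(C)$, put $\Sigma:=\bigcup_{k\in\mathbb N_{>0}}\{z\in X:\nu(\det h_k,z)\ge2\}$; each member of the union is analytic by Siu's theorem and is a proper subset of $X$, because $\nu(\det h_k,\cdot)\ge2$ everywhere would force the local weight of the singular metric $\det h_k$ to be identically $-\infty$, which is impossible. Hence $\Sigma\subsetneq X$, and once the ``moreover'' statement is proved, $E$ is weakly positive at every point of $X\setminus\Sigma$ and therefore pseudo-effective. So it remains to fix $x\in X\setminus\Sigma$, an integer $a\ge1$ and an ample line bundle $A'$, and to find $b\in\mathbb N_{>0}$ with $\Sym^{ab}(E)\otimes(A')^{b}$ globally generated at $x$.

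Put $m:=ab+r$, with $b$ to be chosen large. By Lemma~\ref{PTprop}, $h_m$ induces a singular metric $g$ on $\mathcal O_{\mathbb P(E)}(ab+r)\otimes\pi^{*}A$ with semipositive curvature current satisfying $g\le C_V\pi^{*}(\det h_m)$ near $\pi^{-1}(x)$; hence $\nu(g,y)\le\nu(\det h_m,x)<2$ for all $y\in\pi^{-1}(x)$, so Skoda's integrability theorem — which in the normalization of $\mathcal J(\,\cdot\,)$ used here is sharp exactly at Lelong number $2$ — gives $\mathcal J(g)=\mathcal O_{\mathbb P(E)}$ on a neighbourhood of $\pi^{-1}(x)$. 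Using $K_{\mathbb P(E)}=\mathcal O_{\mathbb P(E)}(-r)\otimes\pi^{*}(\det E\otimes K_X)$, write $\mathcal O_{\mathbb P(E)}(ab)\otimes\pi^{*}(A')^{b}=K_{\mathbb P(E)}\otimes G$ with $G=\mathcal O_{\mathbb P(E)}(ab+r)\otimes\pi^{*}\bigl((A')^{b}\otimes\det E^{*}\otimes K_X^{-1}\bigr)$; for $b\gg0$ the line bundle $(A')^{b}\otimes\det E^{*}\otimes K_X^{-1}\otimes A^{-1}$ is ample, so $h_G:=g\otimes\pi^{*}h'$ with $h'$ a smooth metric of positive curvature has $\sqrt{-1}\Theta_{h_G}(G)\ge\pi^{*}\omega\ge0$ for a Kähler form $\omega$ on $X$, while $\mathcal J(h_G)=\mathcal J(g)$ is trivial near $\pi^{-1}(x)$. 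The fibre $\pi^{-1}(x)\cong\mathbb P^{r-1}$ has trivial normal bundle in $\mathbb P(E)$ and $(K_{\mathbb P(E)}\otimes G)|_{\pi^{-1}(x)}\cong\mathcal O_{\mathbb P^{r-1}}(ab)$, so by adjunction the restriction map $H^{0}(\mathbb P(E),\mathcal O_{\mathbb P(E)}(ab)\otimes\pi^{*}(A')^{b})\to H^{0}(\pi^{-1}(x),\mathcal O_{\mathbb P^{r-1}}(ab))$ is just restriction of sections of $K_{\mathbb P(E)}\otimes G$ to this submanifold. As $\mathcal J(h_G)$ is trivial near $\pi^{-1}(x)$, so is its restriction to $\pi^{-1}(x)$ (restriction theorem for multiplier ideals), hence every section in the target has finite $L^{2}$ norm against $h_G|_{\pi^{-1}(x)}$; together with $\sqrt{-1}\Theta_{h_G}(G)\ge0$ and the strict positivity of $\sqrt{-1}\Theta_{h_G}(G)$ in the $n$ directions transverse to $\pi^{-1}(x)$ (from the ample twist), an Ohsawa–Takegoshi-type extension theorem (the version for extension from a fibre of $\pi$, as underlies Theorem~\ref{pauntaka}; or, iterating, codimension-one extension along $\pi^{-1}$ of a general flag of hypersurfaces through $x$, the multiplier ideal staying trivial at each step) makes the restriction map surjective. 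Since $R^{i}\pi_{*}(\mathcal O_{\mathbb P(E)}(ab)\otimes\pi^{*}(A')^{b})=0$ for $i>0$, cohomology and base change identify it with $H^{0}(X,\Sym^{ab}(E)\otimes(A')^{b})\to(\Sym^{ab}(E)\otimes(A')^{b})_x\otimes k(x)$, so $\Sym^{ab}(E)\otimes(A')^{b}$ is globally generated at $x$; as $a,A'$ were arbitrary, $E$ is weakly positive at $x$.

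The main obstacle is the extension step, and in particular the verification that $2$ is exactly the right threshold: it enters solely through Skoda's theorem, to guarantee that $\mathcal J(g)$ — and hence the restricted multiplier ideal along $\pi^{-1}(x)$ — is trivial, which is what keeps the $L^{2}$ datum to be extended finite; a smaller bound would be too strong for the applications and a larger one would break Skoda. Secondary care is needed with the adjunction bookkeeping ($K_{\mathbb P(E)/X}$, the $\det E^{*}$ twist, triviality of a fibre's normal bundle) so that all restriction maps land in the intended sheaves, and with choosing $b$ large enough that the auxiliary line bundles on $X$ become ample.
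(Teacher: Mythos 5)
Your argument for (A)$\Rightarrow$(B) and your reduction of (C)$\Rightarrow$(A) to the surjectivity of the restriction map from $H^0(\mathbb{P}(E),K_{\mathbb{P}(E)}\otimes G)$ to the fibre $\pi^{-1}(x)$ --- with $G$ carrying the singular metric supplied by Lemma \ref{PTprop} and the threshold $\nu(\det h_m,x)<2$ entering through Skoda's theorem --- follow the same route as the paper. The gap is the extension step itself, which you outsource to ``an Ohsawa--Takegoshi-type extension theorem'' without verifying the hypotheses of any specific such result. There is no off-the-shelf statement that extends sections of an adjoint bundle from a positive-dimensional fibre $\pi^{-1}(x)\cong\mathbb{P}^{r-1}$ of codimension $n$ when the twist carries only a \emph{singular} metric with semipositive curvature current: every usable version needs a quantified curvature reserve of the form $\sqrt{-1}\Theta_{h_G}(G)+\deldel\psi\ge0$ for a globally defined quasi-psh $\psi$ with a log pole of the right order along the fibre, and since $\psi$ must be built from a cut-off it is not psh, so ``semipositive plus strictly positive in the transverse directions'' is not a hypothesis you can simply quote. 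The paper supplies exactly this missing input: it reserves an extra ample factor $H^{b}$ with $b\sqrt{-1}\Theta_{H,g_H}+\deldel\varphi\ge0$ for $\varphi=\eta(n+1)\log|z-x|^2$, sets $\psi=\tfrac{n}{n+1}\pi^{*}\varphi$, extends $s$ locally over $\pi^{-1}(V)\cong V\times\mathbb{P}^{r-1}$, and corrects the cut-off extension by solving $\overline{\partial}F=\overline{\partial}(\widetilde{\rho}s_V)$ via the injectivity theorem of Cao--Demailly--Matsumura applied to $\mathcal{J}(\widetilde{g}e^{-\psi})\to\mathcal{J}(\widetilde{g})$; the pole of order $n\log|z-x|^2$ in $\psi$ is what forces $F|_{\pi^{-1}(x)}\equiv0$. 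This is also where the Lelong-number bound is actually consumed: not (only) to trivialize $\mathcal{J}(g)$ near the fibre, but to get $\det h_m\in L^1(V)$ so that the norms $\|\widetilde{\rho}s_V\|^2_{\widetilde{g}}$ and $\|\overline{\partial}(\widetilde{\rho}s_V)\|^2_{\widetilde{g}e^{-\psi}}$ are finite.

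Your fallback of iterating codimension-one extensions along a flag of hypersurfaces through $x$ has the same unverified curvature hypotheses at each step, and an extra difficulty: the hypersurfaces are constrained to pass through $x$, so the genericity underlying the restriction theorem for multiplier ideals (and the non-degeneracy of the restricted singular metric) is not available for free. Your positivity budget does scale with $b$ (from the ample bundle $(A')^{b}\otimes\det E^{*}\otimes K_X^{-1}\otimes A^{-1}$), so the paper's argument can be run inside your setup; but as written the decisive analytic step is asserted rather than proved.
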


\begin{proof}
(A) $\Rightarrow$ (B). 
We take $x \in X$ such that $E$ is weakly positive at $x$, and take an ample line bundle $A$ such that $A \otimes \det E^{*}$ is ample.
For any $a \in \mathbb{N}_{>0}$, there exists $b \in \mathbb{N}_{>0}$ such that
$\Sym^{(a + r)b}(E) \otimes ( A \otimes \det E^{*} )^{b}$ is globally generated at $x$.
By Corollary \ref{keycor}, the proof is complete.

(B) $\Rightarrow$ (C). Clear.

(C) $\Rightarrow$ (A).
The proof will be divided into 3 steps.

Step 1. 
Fix an ample line bundle $H$.
By Siu's Theorem \cite[Corollary 13.3]{Dem},
the set $Z_{k} \coloneqq  \{ z \in X \colon \nu( \det h_{k} , z ) \ge 2) \} $ is a proper Zariski closed set.
Fix $x \in X \setminus \cup_{k} Z_k $.
We take a local coordinate $(U ; z_1 , \cdots , z_n)$ near $x$.
Let $ \varphi = \eta(n+1) \log |z-x|^2$, where $\eta $ is a cut-off function such that $\eta \equiv 1 $ near $x$.
Let $h_{H}$ be a positive smooth Hermitian metric on $H$.
We take $b \in \mathbb{N}_{>0}$ such that 
\begin{enumerate}
\item $A^{-1} \otimes K_{X} ^{-1} \otimes \det E^{*} \otimes H^b$ is ample, and 
\item $b\sqrt{-1}\Theta_{H , g_{H} } +  \deldel \varphi \ge 0$ in the sense of current.
\end{enumerate}
From $\Sym^{2ab}(E) \otimes H^{2b} \simeq \pi_{*} ( \mathcal{O}_{ \mathbb{P}(E) }(2ab) \otimes \pi^{*} H^{2b} )$, it is enough to show that the restriction map
$$
H^0(  \mathbb{P}(E) , \mathcal{O}_{ \mathbb{P}(E) }(2ab) \otimes \pi^{*} H^{2b} )
\rightarrow
H^0( \pi^{-1}(x) , \mathcal{O}_{ \mathbb{P}(E) }(2ab) \otimes \pi^{*} H^{2b} | _{\pi^{-1}(x)})
$$
is surjective for any $a \in \mathbb{N}_{>0}$.

Step 2. 
From $\pi^{*} (\det E) \simeq  K_{\mathbb{P}(E) / X} \otimes  \mathcal{O}_{ \mathbb{P}(E) }(r) $,
we have
$$
\mathcal{O}_{ \mathbb{P}(E) }(2ab) \otimes \pi^{*} H^{2b} 
\simeq
K_{\mathbb{P}(E)} \otimes ( \mathcal{O}_{ \mathbb{P}(E) }(2ab + r) \otimes \pi^{*}A )
 \otimes \pi^{*} ( A^{-1} \otimes K_{X} ^{-1} \otimes \det E^{*} \otimes H^b) \otimes \pi^{*}(H^b).
$$

Since $\Sym^{2ab + r}(E) \otimes A$ has 
a Griffiths semipositive singular Hermitian metric $h_{2ab + r}$,
by Lemma \ref{PTprop},
$\mathcal{O}_{ \mathbb{P}(E) }(2ab + r) \otimes \pi^{*}A$ has a singular Hermitian metric $g_{2ab + r}$
with semipositive curvature current.
By Skoda's theorem \cite[Lemma 5.6]{Dem} and Lemma \ref{PTprop},
 there exist an open set $x \in V \subset X$ and a positive constant $C$ such that
 \begin{enumerate}
\item $g_{2ab + r} \le C \pi^{*}(\det h_{2ab+r})$ holds on $\pi^{-1}(V)$,
\item $\det h_{2ab+r} \in L^1(V)$, and
\item $\varphi =(n+1) \log |z-x|^2 $ holds on $V$.

\end{enumerate}
Since $A^{-1} \otimes K_{X} ^{-1} \otimes \det E^{*} \otimes H^b$ is ample,
there exists a smooth positive metric $g_{1}$ on $A^{-1} \otimes K_{X} ^{-1} \otimes \det E^{*} \otimes H^b$.
Set $\widetilde{L} \coloneqq  ( \mathcal{O}_{ \mathbb{P}(E) }(2ab + r) \otimes \pi^{*}A )
 \otimes \pi^{*} ( A^{-1} \otimes K_{X} ^{-1} \otimes \det E^{*} \otimes H^b) \otimes \pi^{*}(H^b)$, 
$\widetilde{g} \coloneqq g_{2ab + r}\pi^{*} (g_{1} g_{H}^b)$,
and $\psi \coloneqq \frac{n}{n+1} \pi^{*} \varphi$.
Then the followings hold.
\begin{enumerate}
\item $ K_{\mathbb{P}(E)} \otimes \widetilde{L} \simeq \mathcal{O}_{ \mathbb{P}(E) }(2ab) \otimes \pi^{*} H^{2b} $.
\item $\widetilde{g} $ is a singular Hermitian metric with semipositive curvature current on $\widetilde{L}$.
\item For any $\alpha \in [0 , 1]$, we have
$\sqrt{-1}\Theta_{\widetilde{L} , \widetilde{g}  } + (1 + \frac{\alpha }{n} ) \deldel \psi \ge 0$
in the sense of current.
\end{enumerate}

Step 3. 
Fix a \kah form $\omega_{ \mathbb{P}(E)  }$ on $\mathbb{P}(E) $. 
If necessarily we take $V$ small enough, we may assume that $\pi^{-1} (V)$ is biholomorphic on $V \times \mathbb{P}^{r-1}$.
Therefore, there exists $s_{V} \in H^0( \pi^{-1}(V), K_{\mathbb{P}(E)} \otimes \widetilde{L} )$
such that $s_{V} | _{\pi^{-1}(x)} = s$.
We take a cut-off function $\rho$ on $V$ such that $\rho = 1$ near $x$ and $\inf_{ \text{supp}( \overline{\partial} \rho)} \varphi > - \infty$.
Set $\widetilde{\rho} \coloneqq \pi^{*} \rho$.
We solve the global $\overline{\partial}$-equation $\overline{\partial} F = \overline{\partial}( \widetilde{\rho} s_V)$
with the weight $\widetilde{g} e^{-\psi}$.

First, we have
$$
\|  \widetilde{\rho} s_V \|^{2}_{ \widetilde{g} , \omega_{ \mathbb{P}(E)  } }
= 
\int_{ \pi^{-1}(V) } | \widetilde{\rho} s_V |^{2}_{ \widetilde{g} , \omega_{ \mathbb{P}(E)  } }dV_{\omega_{ \mathbb{P}(E)  } , \mathbb{P}(E)   }
\le
C_{1} \int_{ \pi^{-1}( V) } | \pi^{*}\det  h| dV_{\omega_{ \mathbb{P}(E) }, \mathbb{P}(E)   }
<
+ \infty,
$$
 where $C_1$ is a positive constant.
Similarly, it is easy to check 
$\|  \overline{\partial}( \widetilde{\rho} s_V) \|^{2}_{ \widetilde{g} , \omega_{ \mathbb{P}(E)  } } < +\infty$.
Therefore $\overline{\partial}( \widetilde{\rho} s_V)$ gives rise to a cohomology class 
$[\overline{\partial}( \widetilde{\rho} s_V)]$ which is $[\overline{\partial}( \widetilde{\rho} s_V)] = 0$
in $H^{1}(\mathbb{P}(E) , K_{\mathbb{P}(E)} \otimes \widetilde{L} \otimes \mathcal{J}( \widetilde{g}))$.

Second, we have
\begin{align*}
\| \overline{\partial}( \widetilde{\rho} s_V)\|^{2}_{ \widetilde{g} e^{-\psi} , \omega_{ \mathbb{P}(E)  } }
=
\int_{ \pi^{-1}(V) } | \overline{\partial}( \widetilde{\rho} s_V) |^{2}_{ \widetilde{g} e^{-\psi} , \omega_{ \mathbb{P}(E)  } }dV_{\omega_{ \mathbb{P}(E)  } , \mathbb{P}(E)   }
\le
C_{2}
\int_{ \pi^{-1}( supp (\overline{\partial} \rho)) } | \pi^{*} (\det h) | e^{- \psi} dV_{\omega_{ \mathbb{P}(E)  } , \mathbb{P}(E)   } 
&<
+ \infty ,
\end{align*}
where $C_2$ is a positive constant.
Therefore
$\overline{\partial}( \widetilde{\rho} s_V)$ is a $\overline{\partial}$-closed $( n + r -1 , 1 )$ form
with $\widetilde{L}$ value which is square integrable the weight of $\widetilde{g} e^{-\psi}$.

By the injectivity theorem in \cite[Theorem 1.5]{CDM},
the natural morphism
$$
H^{1}(\mathbb{P}(E) , K_{\mathbb{P}(E)} \otimes \widetilde{L} \otimes \mathcal{J}( \widetilde{g} e^{-\psi}))
\rightarrow
H^{1}(\mathbb{P}(E) , K_{\mathbb{P}(E)} \otimes \widetilde{L} \otimes \mathcal{J}( \widetilde{g}))
$$
is injective.
From $[\overline{\partial}( \widetilde{\rho} s_V)] = 0$ in $H^{1}(\mathbb{P}(E) , K_{\mathbb{P}(E)} \otimes \widetilde{L} \otimes \mathcal{J}( \widetilde{g}))$,
we have
$[\overline{\partial}( \widetilde{\rho} s_V)] = 0$ in $H^{1}(\mathbb{P}(E) , K_{\mathbb{P}(E)} \otimes \widetilde{L} \otimes \mathcal{J}( \widetilde{g} e^{-\psi} ))$.
Hence we obtain an $( n + r -1 , 1 )$ form
$F$ with $\widetilde{L}$ value which is square integrable with the weight $\widetilde{g} e^{-\psi} $
such that $\overline{\partial}F = \overline{\partial}( \widetilde{\rho} s_V)$.

We will show that $F |_{ \pi^{-1} (x)} \equiv 0$.
To obtain a contradiction, we assume that $F(z) \neq 0$ for a point $z \in  \pi^{-1} (x)$.
We take an open set $x \in W \subset \subset V$, an open set $W' \subset \mathbb{P}^{r-1}$, and a positive constant $C_3$ such that
$W \times W' \subset \subset \pi^{-1}(V)$ and $|F|^{2}_{ \widetilde{g} } \ge C_3$ on $W$.
Thus we have
\begin{align*}
\| F \|^{2}_{ \widetilde{g} e^{-\psi} , \omega_{ \mathbb{P}(E)  } }
=
\int_{ \mathbb{P}(E) } | F|^{2}_{ \widetilde{g} e^{-\psi} , \omega_{ \mathbb{P}(E)  } }dV_{\omega_{ \mathbb{P}(E)  } , \mathbb{P}(E)   }
&\ge
\int_{ W \times W'  } | F |^{2}_{ \widetilde{g} e^{-\psi} , \omega_{ \mathbb{P}(E)  } }dV_{\omega_{ \mathbb{P}(E)  } , \mathbb{P}(E)   }\\
&\ge
C_{4}
\int_{ W \times W'   } e^{- \psi}dV_{\omega_{ \mathbb{P}(E)  } , \mathbb{P}(E)   } \\
&\ge
C_5 \int_{ W \times W'   } e^{- n  \log |z - x|^2  }dV_{\omega_{ \mathbb{P}(E)  } , \mathbb{P}(E)   }
\\
&=
+ \infty ,
\end{align*}
where $C_4$ and $C_5$ are positive constants.
This is a contradiction from $ \| F \|^{2}_{ \widetilde{g} e^{-\psi} , \omega_{ \mathbb{P}(E)  } } < + \infty$.

Therefore we put  $S \coloneqq \widetilde{\rho} s_V - F \in H^0( \mathbb{P}(E) , K_{\mathbb{P}(E)} \otimes \widetilde{L} ) $, then 
$S | _{\pi^{-1}(x)} = ( \widetilde{\rho}  s_V - F) | _{\pi^{-1}(x)} = s $, which completes the proof.
Hence $E$ is weakly positive at any $x \in X \setminus \cup_{k} Z_k $.
\end{proof}

By the same argument, we have the following corollary.
\begin{cor}
\label{weakviehweg}
Let $X$ be a smooth projective $n$-dimensional variety and $E$ be a holomorphic vector bundle of rank $r$ on $X$.
The followings are equivalent.
\begin{enumerate}
\item[(A)] $E$ is weakly positive. 
\item[(B)]  There exist an ample line bundle $A$ and a proper Zariski closed set $Z \subset X$, such that $\Sym^{k}(E) \otimes A$ has 
a Griffiths semipositive singular Hermitian metric $h_k$ for any $k \in \mathbb{N}_{>0}$
and $h_k$ is smooth outside $Z$.

\item[(C)]  There exist an ample line bundle $A$ and a proper Zariski closed set $Z \subset X$, such that $\Sym^{k}(E) \otimes A$ has 
a Griffiths semipositive singular Hermitian metric $h_k$ for any  $k \in \mathbb{N}_{>0}$
and
 $\cup_{k} \{ z \in X \colon \nu( \det h_k , z ) \ge  2\} \subset Z$.

\end{enumerate}

\end{cor}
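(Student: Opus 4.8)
The plan is to prove the cycle of implications $(A)\Rightarrow(B)\Rightarrow(C)\Rightarrow(A)$, running parallel to the proof of Theorem \ref{weaklypositive} but carrying a single Zariski closed set $Z$ through the whole argument instead of a family $Z_k$. The implication $(B)\Rightarrow(C)$ is immediate: if $h_k$ is smooth outside $Z$ then $\det h_k$ is smooth outside $Z$, so $\nu(\det h_k,z)=0$ for every $z\in X\setminus Z$, whence $\bigcup_k\{z\in X\colon \nu(\det h_k,z)\ge 2\}\subset Z$.

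For $(A)\Rightarrow(B)$ I would first fix an ample line bundle $A$ with $A\otimes\det E^{*}$ ample, and let $U\subset X$ be a Zariski open set at every point of which $E$ is weakly positive. The one ingredient beyond Theorem \ref{weaklypositive} is upgrading pointwise global generation to a statement uniform over $U$: for fixed $a$, the locus where $\Sym^{ab}(E)\otimes(A\otimes\det E^{*})^{b}$ is globally generated is Zariski open, and since the natural maps $\Sym^{m}\bigl(\Sym^{ab}(E)\otimes(A\otimes\det E^{*})^{b}\bigr)\to \Sym^{abm}(E)\otimes(A\otimes\det E^{*})^{bm}$ are surjective in characteristic zero, this locus can only grow when $b$ is replaced by a multiple; by the Noetherian property of $X$ the union of these loci over all $b$ is attained for a single $b$. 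Hence for each $k$ there is $b=b(k)$ with $\Sym^{(k+r)b}(E)\otimes(A\otimes\det E^{*})^{b}$ globally generated at every point of $U$, and the ``moreover'' part of Corollary \ref{keycor}, applied with $Z:=X\setminus U$ (a proper Zariski closed set, as $E$ is weakly positive), yields for every $k$ a Griffiths semipositive singular Hermitian metric $h_k$ on $\Sym^{k}(E)\otimes A$ smooth outside $Z$.

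For $(C)\Rightarrow(A)$ I observe that condition $(C)$ here is a strengthening of condition $(C)$ of Theorem \ref{weaklypositive}: it supplies in particular an ample $A$ and metrics $h_k$ on $\Sym^{k}(E)\otimes A$ as required there. The ``moreover'' conclusion of Theorem \ref{weaklypositive} then gives that $E$ is weakly positive at every point of $X\setminus\bigcup_k\{z\colon \nu(\det h_k,z)\ge 2\}$; by hypothesis this set contains $X\setminus Z$, a nonempty Zariski open subset of $X$, so taking $U=X\setminus Z$ shows $E$ is weakly positive. (Alternatively one can rerun Steps 1--3 of the proof of Theorem \ref{weaklypositive} verbatim, as nothing there used more than that the chosen base point avoids a countable union of proper subvarieties.)

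I expect no genuinely new analytic obstacle here: the injectivity/$\,L^{2}$ machinery, Skoda's estimate, and the Pău{n}--Takayama direct-image theorem are all already deployed in Theorem \ref{weaklypositive} and Corollary \ref{keycor}. The only point demanding care is the bookkeeping in $(A)\Rightarrow(B)$ that makes $Z$ independent of $k$, i.e. the uniform-over-$U$ global generation statement; once that and Corollary \ref{keycor} are in place, the rest is formal.
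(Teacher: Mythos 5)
Your proposal is correct and follows essentially the same route as the paper: $(B)\Rightarrow(C)$ is immediate, $(C)\Rightarrow(A)$ invokes the ``moreover'' clause of Theorem \ref{weaklypositive}, and $(A)\Rightarrow(B)$ combines the Noetherian/directedness argument producing a single $b$ with the ``moreover'' part of Corollary \ref{keycor} applied to $Z=X\setminus U$. If anything, you spell out the uniform-over-$U$ global generation step (surjectivity of $\Sym^{m}\circ\Sym^{ab}\to\Sym^{abm}$ plus quasi-compactness) more explicitly than the paper does.
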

\begin{proof}
(B) $\Rightarrow$ (C) is clear. By Theorem \ref{weaklypositive}, we obtain (C) $\Rightarrow$ (A).
We give a proof of (A) $\Rightarrow$ (B).
By the definition, there exists a Zariski open set $U$ such that $E$ is a weakly positive at any $x \in U$.
We take an ample line bundle $A$ such that $A \otimes \det E^{*}$ is ample.
Fix $a \in \mathbb{N}_{>0}$.
For any $m \in \mathbb{N}_{>0}$, we define $Z_m$ by the Zariski closed set of points $x \in X$ such that $ \Sym^{(a + r)m}E \otimes (A \otimes \det E^{*})^{m}$ is not globally generated at $x$.
Then we obtain $b \in \mathbb{N}_{>0}$ such that $Z_b =  \cap_{ m \in \mathbb{N}_{>0}} Z_m$.
Thus, $\Sym^{(a + r)b}E \otimes (A \otimes \det E^{*})^{b}$ is globally generated at any $x \in  U$ by $Z_b \subset X \setminus U$.
By Corollary \ref{keycor}, $ \Sym^{a}E \otimes A$ has a Griffiths semipositive singular Hermitian metric $h$ which is smooth on  $U$, the proof is complete.
\end{proof}
The following corollary was already proved in \cite{PT}.
We give an another proof.

\begin{cor} \cite[Proposition 2.3.5]{PT}
\label{sHmweak}
Let $X$ be a smooth projective variety and $E$ be a holomorphic vector bundle on $X$.
If $E$ has a Griffiths semipositive singular Hermitian metric $h$,
then $E$ is weakly positive at any $x \in  \{ z \in X \colon \nu( \det h , z ) = 0\}  $.
In particular, $E$ is pseudo-effective.
\end{cor}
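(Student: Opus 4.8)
The statement to prove is Corollary \ref{sHmweak}: if $E$ carries a Griffiths semipositive singular Hermitian metric $h$, then $E$ is weakly positive at every $x$ with $\nu(\det h, x) = 0$, and in particular $E$ is pseudo-effective. The natural strategy is to reduce immediately to Theorem \ref{weaklypositive}: the hypothesis is almost, but not quite, condition (C) of that theorem, since there $\Sym^k(E)\otimes A$ is required to carry a Griffiths semipositive singular metric for \emph{every} $k$ and for a fixed \emph{ample} line bundle $A$. So the first step is to manufacture such metrics out of the single metric $h$ on $E$.

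First I would fix an ample line bundle $A$ and observe that, for each $k\in\mathbb{N}_{>0}$, the $k$-th symmetric power metric $\Sym^k(h)$ is a Griffiths semipositive singular Hermitian metric on $\Sym^k(E)$ (this is standard: symmetric powers preserve Griffiths semipositivity, and one can see it via the quotient map $E^{\otimes k}\to\Sym^k(E)$ together with the fact that $\log|u|^2_{h^*}$ plurisubharmonic passes to tensor powers and subbundles). Choosing any smooth metric $h_A$ on $A$ with positive curvature, the product $h_k := \Sym^k(h)\otimes h_A$ is then a Griffiths semipositive singular Hermitian metric on $\Sym^k(E)\otimes A$ for every $k$. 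Thus $E$ satisfies condition (C) of Theorem \ref{weaklypositive}, so $E$ is pseudo-effective, and moreover $E$ is weakly positive at every point of $X\setminus \bigcup_k\{z : \nu(\det h_k, z)\ge 2\}$.

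The remaining point is to translate the Lelong-number condition on the $\det h_k$ back into a condition on $\det h$. Here I would compute $\det h_k$ directly: the determinant of the symmetric power metric is a positive power of $\det h$ times the smooth contribution from $h_A$. Precisely, $\det\big(\Sym^k(E)\big) = (\det E)^{\otimes N_k}$ for $N_k = \binom{k+r-1}{r-1}\cdot\frac{k}{r}$ (the rank of $\Sym^k E$ is $\binom{k+r-1}{r-1}$ and each factor of $E$ appears symmetrically), so $\det h_k = (\det h)^{N_k}\cdot(\text{smooth})$ up to the twist by $h_A$, which is smooth and contributes nothing to Lelong numbers. Hence $\nu(\det h_k, z) = N_k\,\nu(\det h, z)$. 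Since $N_k \ge 1$ for all $k\ge 1$, the condition $\nu(\det h_k, z) < 2$ for all $k$ is implied by $\nu(\det h, z) = 0$, which is exactly $z \in \{\nu(\det h,\cdot) = 0\}$. This gives the claimed set of points of weak positivity, and pseudo-effectivity follows since that set is a nonempty (Zariski-dense) subset of $X$ — nonemptiness because $\{\nu(\det h,\cdot)\ge 1\}$ is a proper Zariski closed set by Siu's theorem.

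The only genuine obstacle is verifying that $\Sym^k(h)$ is Griffiths semipositive when $h$ is — i.e. that the functorial construction of the symmetric power metric is compatible with the dual/seminegative formulation in the definition of Griffiths semipositivity for singular metrics, including the measurability and $0<\det<\infty$ conditions. This is essentially contained in \cite{PT} (symmetric powers of Griffiths semipositive singular metrics), so I would cite it; the rest is the bookkeeping with $\det$ and Lelong numbers above, which is routine.
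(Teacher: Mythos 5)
Your proposal is correct and follows essentially the same route as the paper: equip $\Sym^k(E)\otimes A$ with $\Sym^k(h)\,h_A$, invoke condition (C) of Theorem \ref{weaklypositive}, and note that the Lelong number of $\det(\Sym^k(h)\,h_A)$ at $z$ is a positive multiple of $\nu(\det h,z)$ (the smooth factor $h_A$ contributing nothing), so the bad locus $\bigcup_k\{\nu(\det h_k,\cdot)\ge 2\}$ is exactly $\{\nu(\det h,\cdot)>0\}$. Your extra bookkeeping with $N_k=\tfrac{k}{r}\binom{k+r-1}{r-1}$ just makes explicit the one-line Lelong-number identity the paper states without proof.
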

\begin{proof}
Since $E$ has a Griffiths semipositive singular Hermitian metric, 
$\Sym^{k}(E)$ also has a Griffiths semipositive singular Hermitian metric $\Sym^k(h)$ for any $k \in \mathbb{N}_{>0}$.
Therefore, for any ample line bundle $A$, $\Sym^{k}(E) \otimes A$ has a Griffiths semipositive singular Hermitian metric $\Sym^k(h) h_{A}$, where $h_A$ is a smooth metric with positive curvature on $A$.
Since we have 
$$\cup_{k \in \mathbb{N}_{>0}} \{ z \in X \colon \nu( \det \Sym^k(h) h_{A} , z ) \ge 2) \} 
= \{ z \in X \colon \nu( \det h , z ) > 0\},
$$
$E$ is weakly positive at any $x \in \{ z \in X \colon \nu( \det h , z ) = 0\}$ by Theorem \ref{weaklypositive}.
\end{proof}

Next, we treat big vector bundles.
\begin{cor}
\label{Vbig}
Let $X$ be a smooth projective $n$-dimensional variety and $E$ be a holomorphic vector bundle of rank $r$ on $X$.
The followings are equivalent.
\begin{enumerate}
\item[(A)] $E$ is big.

\item[(B)]  There exist $k \in \mathbb{N}_{>0}$, an ample line bundle $A$, and 
a proper Zariski closed set $Z \subset X$, such that $\Sym^{k}(E) \otimes A^{-1}$ has 
a Griffiths semipositive singular Hermitian metric $h$ 
and $h$ is smooth outside $Z$.

\item[(C)]
There exist an ample line bundle $A$ and $k \in \mathbb{N}_{>0}$, such that $\Sym^{k}(E) \otimes A^{-1}$ has 
a Griffiths semipositive singular Hermitian metric.

\end{enumerate}
\end{cor}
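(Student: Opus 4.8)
The plan is to prove the cycle of implications (B) $\Rightarrow$ (C) $\Rightarrow$ (A) $\Rightarrow$ (B), mirroring the structure already used in Theorem \ref{weaklypositive} and Corollary \ref{weakviehweg}. The implication (B) $\Rightarrow$ (C) is immediate since we simply forget the information that $h$ is smooth outside $Z$. So the real content is the two remaining implications.

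For (C) $\Rightarrow$ (A), I would argue as follows. Suppose $\Sym^k(E) \otimes A^{-1}$ carries a Griffiths semipositive singular Hermitian metric $h$. Fix an ample line bundle $H$; after replacing it by a large power we may assume $A \otimes H^{-1}$ is still ample (or rather, we want to arrange an ample twist so that the tensor $\Sym^k(E) \otimes A^{-1} \otimes (\text{ample})$ is pseudo-effective with controlled Lelong numbers). The cleanest route is: $\Sym^k(E) \otimes A^{-1}$ has a Griffiths semipositive singular Hermitian metric, and then for any ample line bundle $B$ the bundle $\Sym^m(\Sym^k(E) \otimes A^{-1}) \otimes B = \Sym^{mk}(E) \otimes A^{-m} \otimes B$ has a Griffiths semipositive singular Hermitian metric for all $m$ (tensor the induced metric on the symmetric power with a positively curved smooth metric on $B$). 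This shows $\Sym^k(E) \otimes A^{-1}$ is pseudo-effective by Corollary \ref{sHmweak} — wait, more carefully, Corollary \ref{sHmweak} needs the Lelong number of the determinant to vanish at some point, which is guaranteed because $\det(\Sym^k(E)\otimes A^{-1})$ has a singular metric whose curvature is the sum of a fixed smooth form and a positive current, so its Lelong number is zero on a dense Zariski open set. Hence $\Sym^k(E)\otimes A^{-1}$ is pseudo-effective, and therefore $E$ is big by definition (taking $a = k$ there).

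For (A) $\Rightarrow$ (B): if $E$ is big, then by definition there are $k \in \mathbb{N}_{>0}$ and an ample line bundle $A$ such that $\Sym^k(E) \otimes A^{-1}$ is pseudo-effective, i.e. weakly positive at some point $x$. Apply Corollary \ref{weakviehweg} — or rather its pointwise predecessor — to the vector bundle $F := \Sym^k(E) \otimes A^{-1}$: pseudo-effectivity gives, for a suitable ample $A'$ with $A' \otimes \det F^*$ ample, and for each $a$, an integer $b$ with $\Sym^{(a+\operatorname{rk} F)b}(F) \otimes (A' \otimes \det F^*)^b$ globally generated at $x$, hence (by Siu-type semicontinuity) globally generated on a Zariski open set $U$ containing $x$. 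Then Corollary \ref{keycor} produces a Griffiths semipositive singular Hermitian metric on $\Sym^a(F) \otimes A'$ smooth outside a proper Zariski closed $Z$ with $Z \cap U = \varnothing$. Taking $a$ so that $\Sym^a(F) \otimes A' = \Sym^{ak}(E) \otimes (A^{-a} \otimes A')$ and choosing $a$ large enough that $A^{-a} \otimes A'$ is anti-ample, i.e. equals $(A'')^{-1}$ for an ample $A''$, we land exactly in statement (B) with $k$ replaced by $ak$. The main obstacle is bookkeeping: one must choose the various ample twists ($A'$, then $A^{-a}\otimes A'$) so that at the end the net twist on $\Sym^{ak}(E)$ is the inverse of an ample line bundle, and simultaneously keep the "globally generated at $x$" hypothesis of Corollary \ref{keycor} satisfied — this is where the big-ness (as opposed to mere pseudo-effectivity of $E$ itself) is essential, since it is $\Sym^k(E)\otimes A^{-1}$, not $E$, that is pseudo-effective. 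No genuinely new analytic input is needed beyond Corollaries \ref{keycor}, \ref{sHmweak} and \ref{weakviehweg}.
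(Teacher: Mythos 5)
Your proposal is correct and follows essentially the same route as the paper: (B) $\Rightarrow$ (C) is trivial, (C) $\Rightarrow$ (A) reduces to the pseudo-effectivity criterion (Corollary \ref{sHmweak}, or equivalently Theorem \ref{weaklypositive}) applied to $F=\Sym^{k}(E)\otimes A^{-1}$, and (A) $\Rightarrow$ (B) applies Theorem \ref{weaklypositive} (via Corollary \ref{keycor}) to $F$ and then absorbs the auxiliary ample twist into a large power of $A$. The only caveat --- one the paper itself shares --- is the silent identification of $\Sym^{a}(\Sym^{k}(E))$ with $\Sym^{ak}(E)$; the latter is only a quotient (direct summand) of the former, which suffices because quotient metrics preserve Griffiths semipositivity.
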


\begin{proof}
(A) $\Rightarrow$ (B).
There exist an ample line bundle $A$
and $b \in \mathbb{N}_{>0}$, such that $\Sym^{b}(E) \otimes A^{-1}$ is pseudo-effective.
By Theorem \ref{weaklypositive}, 
there exists an ample line bundle $H$ such that 
$\Sym^{kb}(E) \otimes A^{-k} \otimes H$ 
has a Griffiths semipositive singular Hermitian metric $h_{k}$
for any $k \in \mathbb{N}_{>0}$.
Moreover, there exists a proper Zariski closed set $Z_k$ such that $h_k$ is smooth outside $Z_k$.
Therefore we take $k \in \mathbb{N}_{>0}$ such that $A^{k} \otimes H^{-1}$ is ample, which completes the proof.

(B) $\Rightarrow$ (C). Clear.

(C) $\Rightarrow$ (A).
For any $a \in \mathbb{N}_{>0}$, $\Sym^{a}( \Sym^k( E ) \otimes  A^{-1} ) \otimes A$ has 
a Griffiths semipositive singular Hermitian metric.
By Theorem \ref{weaklypositive}, $\Sym^k( E ) \otimes  A^{-1} $ is pseudo-effective, which completes the proof.
\end{proof}

{\it Proof of Corollary \ref{big-RC} }. 
By Corollary \ref{Vbig}, there exist  $k \in \mathbb{N}_{>0}$, an ample line bundle $A$, and a proper Zariski closed set $Z \subset X$, such that $\Sym^{k}(T_{X}) \otimes A^{-1}$ has 
a Griffiths semipositive singular Hermitian metric $h$ which is smooth on $X \setminus Z$.
Since $X$ is uniruled, it is enough to show that $K_{X}^{-1}.C \ge n+1$ for any $x \in X \setminus Z$ and for any rational curve $C$ through $x$ by \cite[Cor 0.4]{CMSB02}.

Fix $x \in X \setminus Z$ and  a rational curve $C$ through $x$ .
First we will show that $T_{X}|_{C}$ is ample.
By \cite[Theorem 6.4.15]{Laz}, it is enough to show that any quotient bundle of $T_{X}|_{C}$ has positive degree.
Fix a quotient bundle $G$ of $T_{X} |_{C}$ and a smooth positive metric $h_A$ on $A$.
$\Sym^k G$ has a quotient metric $h_{\Sym^k G}$ induced by $ (h h_A) |_{C}$ on $\Sym^{k}(T_{X} |_C) $. 
Therefore $\det G$ has a singular Hermitian metric $h_{\det G}$ with positive curvature current
by some root of $\det h_{\Sym^k G}$.
We have
$$
\deg G
= \int_{C} c_{1}(G)
=\int_{C} c_{1}(\det G,h_{\det G})
= \int_{C}\frac{\sqrt{-1}}{2 \pi }\Theta_{\det G ,h_{\det G}}
> 0, 
$$
thus $T_{X}|_{C}$ is ample. 

Since $C$ is a rational curve,  we obtain
$$
T_{X}|_{C} \cong \mathcal{O}_{C}(a_1) \oplus \cdots \oplus \mathcal{O}_{C}(a_n) , 
$$ 
where $a_i$ is integer for any $1 \le i \le n$ , $a_1 \ge a_2 \ge \dots \ge a_n$, and $a_1 \ge 2$.
Since $T_{X}|_{C}$ is ample, we have $a_n \ge 1$, therefore  $K_{X}^{-1}.C = a_1 + \cdots + a_n \ge n+1$, which completes the proof.


Finally, we study a nef vector bundle.
\begin{prop}[cf. \cite{DPS1} Theorem 1.12 ] 
Let $X$ be a smooth projective variety and $E$ be a holomorphic vector bundle of rank $r$ on $X$.
$E$ is nef $($i.e. $\mathcal{O}_{ \mathbb{P}(E) }( 1)$ is nef$)$ iff there exists an ample line bundle $A$ on $X$ 
such that $\Sym^k(E) \otimes A$ has a Griffiths positive smooth Hermitian metric for any  $k \in \mathbb{N}_{>0}$.
\end{prop}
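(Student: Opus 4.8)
The plan is to prove the two implications separately: the ``if'' part reduces to Griffiths' theorem and a limit argument in $N^{1}(\mathbb{P}(E))_{\mathbb{R}}$, while the ``only if'' part is a uniform-in-$k$ application of the positivity of direct images that also underlies Theorem \ref{pauntaka} and the results of \cite{LSY}.

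For the ``if'' direction, suppose $A$ is an ample line bundle such that $\Sym^{k}(E)\otimes A$ carries a Griffiths positive smooth metric for every $k\in\mathbb{N}_{>0}$. By Griffiths' theorem \cite{Gri} each $\Sym^{k}(E)\otimes A$ is then an ample vector bundle, so $\mathcal{O}_{\mathbb{P}(\Sym^{k}(E)\otimes A)}(1)$ is ample; restricting along the Veronese embedding $\mu_{k}\colon\mathbb{P}(E)\hookrightarrow\mathbb{P}(\Sym^{k}(E))$ and using, as in Lemma \ref{PTprop}, that $\pi=\pi_{k}\circ\mu_{k}$, that $\mathbb{P}(\Sym^{k}(E)\otimes A)=\mathbb{P}(\Sym^{k}(E))$ and that $\mu_{k}^{*}\mathcal{O}_{\mathbb{P}(\Sym^{k}(E))}(1)=\mathcal{O}_{\mathbb{P}(E)}(k)$, I would conclude that $\mathcal{O}_{\mathbb{P}(E)}(k)\otimes\pi^{*}A$ is ample on $\mathbb{P}(E)$ for every $k$. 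Hence $\mathcal{O}_{\mathbb{P}(E)}(1)=\lim_{k\to\infty}\frac{1}{k}\bigl(\mathcal{O}_{\mathbb{P}(E)}(k)\otimes\pi^{*}A\bigr)$ is a limit of ample $\mathbb{Q}$-classes, hence nef, i.e. $E$ is nef (cf. \cite{DPS1}).

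For the ``only if'' direction, assume $E$ is nef, so $L:=\mathcal{O}_{\mathbb{P}(E)}(1)$ is nef. I would fix an ample line bundle $B$ on $X$ together with a smooth positively curved metric $h_{B}$, and choose an ample line bundle $A$ on $X$ such that $A\otimes\det E^{*}\otimes B^{-2}$ is nef (possible since $\det E$ is nef, e.g. $A=B^{2}\otimes\det E\otimes B'$ with $B'$ ample). Since $L$ is $\pi$-ample and nef and $B$ is ample, $L\otimes\pi^{*}B$ is ample on $\mathbb{P}(E)$; consequently, for every $k\in\mathbb{N}_{>0}$ the line bundle
\begin{align*}
L^{\otimes(k+r)}\otimes\pi^{*}\bigl(A\otimes\det E^{*}\otimes B^{-1}\bigr)
&=\bigl(L\otimes\pi^{*}B\bigr)\otimes L^{\otimes(k+r-1)}\otimes\pi^{*}\bigl(A\otimes\det E^{*}\otimes B^{-2}\bigr)
\end{align*}
is a tensor product of an ample line bundle with two nef line bundles, hence ample, and so by Kodaira's theorem it admits a smooth positively curved metric $\rho_{k}$. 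Using $K_{\mathbb{P}(E)/X}\simeq\pi^{*}\det E\otimes L^{\otimes(-r)}$ and the vanishing of $R^{i}\pi_{*}\mathcal{O}_{\mathbb{P}(E)}(k)$ for $i>0$, $k\geq 1$, one has
\begin{align*}
\pi_{*}\Bigl(K_{\mathbb{P}(E)/X}\otimes L^{\otimes(k+r)}\otimes\pi^{*}\bigl(A\otimes\det E^{*}\otimes B^{-1}\bigr)\Bigr)&\simeq\Sym^{k}(E)\otimes A\otimes B^{-1},
\end{align*}
and Berndtsson's theorem on the positivity of direct images (the smooth precursor of Theorem \ref{pauntaka}, whose $L^{2}$-metric is smooth here because $\pi$ is a smooth $\mathbb{P}^{r-1}$-bundle and $\rho_{k}$ is smooth) equips $\Sym^{k}(E)\otimes A\otimes B^{-1}$ with a smooth Nakano, in particular Griffiths, semipositive metric $g_{k}$. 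Then $g_{k}\otimes h_{B}$ is a smooth Griffiths positive metric on $\Sym^{k}(E)\otimes A$.

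The main obstacle is the ``only if'' direction, and within it the requirement that one \emph{single} ample $A$ serve \emph{all} $k$ simultaneously: this is exactly where I would use that $L=\mathcal{O}_{\mathbb{P}(E)}(1)$ is nef, and not merely $\pi$-ample, so that increasing the $\mathcal{O}(1)$-degree never spoils ampleness on $\mathbb{P}(E)$. The upgrade from Berndtsson semipositivity to strict positivity is then handled by the auxiliary bundle $B$ and the twist by $h_{B}$, so that no strict form of Berndtsson's theorem is needed. The remaining verifications, namely smoothness of the $L^{2}$-metric, the identification of the direct image, and the elementary fact that tensoring a Nakano (Griffiths) semipositive bundle by a positively curved line bundle yields a Nakano (Griffiths) positive bundle, are routine.
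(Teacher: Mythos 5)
Your proposal is correct and follows essentially the same route as the paper: the ``if'' direction via Griffiths' theorem, the Veronese embedding and closedness of the nef cone, and the ``only if'' direction via the identification $\Sym^k(E)\otimes(\text{twist})\simeq\pi_*\bigl(K_{\mathbb{P}(E)/X}\otimes\mathcal{O}_{\mathbb{P}(E)}(k+r)\otimes\pi^*(\text{twist}')\bigr)$, smooth positivity of direct images for an ample line bundle upstairs, and a final tensor by a positively curved ample factor to upgrade semipositive to positive. The only spot deserving a word of justification is the assertion that $L\otimes\pi^*B$ is ample for $L$ nef and $\pi$-ample and $B$ ample (a standard Nakai--Moishezon argument); the paper sidesteps this by instead twisting by a sufficiently high power $(H\otimes\det E^*)^N$ so that $E\otimes(H\otimes\det E^*)^N$ is already ample.
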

\begin{proof}
($\Rightarrow$)
We assume that $E$ is nef.
We take an ample line bundle $H$ on $X$ such that $H \otimes \det E^{*}$ is ample,
and take $N \in \mathbb{N}_{>0}$ such that $E \otimes (H \otimes \det E^{*})^N$
is ample. 
For any $k \in \mathbb{N}_{>0}$, we have
$$
\Sym^k(E) \otimes H \otimes (H \otimes \det E^{*})^{N-1}
\simeq
\pi_{*}(  K_{\mathbb{P}(E) / X} \otimes  \mathcal{O}_{ \mathbb{P}(E) }( k +r) \otimes \pi^{*} ( H \otimes \det E^{*} )^N ).
$$
Since $\mathcal{O}_{ \mathbb{P}(E) }( 1) $ is nef,
$ \mathcal{O}_{ \mathbb{P}(E) }( k +r) \otimes \pi^{*} ( H \otimes \det E^{*} )^N $ is ample.
Therefore, $\Sym^k(E) \otimes H \otimes (H \otimes \det E^{*})^{N-1} $ has a Griffiths semipositive smooth Hermitian metric for any $k \in \mathbb{N}_{>0}$.
We put $A \coloneqq H^2 \otimes (H \otimes \det E^{*})^{N-1}$, the proof is complete.

($\Leftarrow$)
Let $ \mu_{k} \colon \mathbb{P}( E )   \rightarrow \mathbb{P}( \Sym^k(E) )  = \mathbb{P}( \Sym^k(E) \otimes A ) $ be a standard 
$k$-th Veronese embedding.
Since 
$\mathcal{O}_{ \mathbb{P}( \Sym^k(E) \otimes A ) }(1) $ is ample and
$\mathcal{O}_{ \mathbb{P}(E) }(k) \otimes \pi^{*} A
  = 
 \mu_{k}^{*} ( \mathcal{O}_{ \mathbb{P}( \Sym^k(E) \otimes A ) }(1) )$,
$\mathcal{O}_{ \mathbb{P}(E) }(k) \otimes \pi^{*} A$ is ample  for any $k \in \mathbb{N}_{>0}$.
Therefore $\mathcal{O}_{ \mathbb{P}(E) }(1)$ is nef.
\end{proof}

\begin{exa}[Cutkosky's criterion]
Let $X$ be a smooth projective variety and $L_1, \dots , L_r$ be holomorphic line bundles on $X$.
The vector bundle $E$ is defined by $E \coloneqq \oplus_{i = 1} ^{r} L_i$.
By \cite[Chapter 2.3.B]{Laz},
we have the following criterions.
\begin{enumerate}
\item $E $ is ample iff any $L_i$ is ample.
\item $E $ is nef iff any $L_i$ is nef.
\end{enumerate}
We give a generalization of Cutkosky's criterion of big and pseudo-effective.
\begin{lem}
\begin{enumerate}
\item $E $ is big iff any $L_i$ is big.
\item $E $ is pseudo-effective iff any $L_i$ is pseudo-effective.
Moreover $E $ is pseudo-effective iff $E$ has a Griffiths semipositive singular Hermitian metric.
\end{enumerate}
\end{lem}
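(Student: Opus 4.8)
The plan is to reduce both assertions to the already-established characterizations of pseudo-effectivity, via the decomposition $\Sym^{k}(E)=\bigoplus_{|\alpha|=k}L_1^{\alpha_1}\otimes\cdots\otimes L_r^{\alpha_r}$ of symmetric powers of a direct sum of line bundles, where $\alpha$ runs over $r$-tuples of nonnegative integers with $\alpha_1+\cdots+\alpha_r=k$. I would prove (2) first and deduce (1) from it together with elementary properties of the pseudo-effective cone.

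For (2), the forward implication is purely algebraic. If $E$ is weakly positive at a point $x$, then for every ample line bundle $A$ and every $a\in\mathbb{N}_{>0}$ there is $b\in\mathbb{N}_{>0}$ with $\Sym^{ab}(E)\otimes A^{b}$ globally generated at $x$; since $L_i^{ab}\otimes A^{b}$ is the direct summand of $\Sym^{ab}(E)\otimes A^{b}$ indexed by $\alpha_i=ab$, $\alpha_j=0$ $(j\ne i)$, and global generation at $x$ passes to direct summands, $L_i^{ab}\otimes A^{b}$ is globally generated at $x$, so $L_i$ is weakly positive at $x$, hence pseudo-effective. For the converse, if every $L_i$ is pseudo-effective then by Demailly's theorem each $L_i$ carries a singular Hermitian metric $h_i$ with semipositive curvature current, and I would check that $h:=\bigoplus_i h_i$ is a Griffiths semipositive singular Hermitian metric on $E$; Corollary \ref{sHmweak} then yields that $E$ is pseudo-effective. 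The ``Moreover'' clause is then immediate: the backward direction is Corollary \ref{sHmweak}, and if $E$ is pseudo-effective then by the forward implication each $L_i$ is pseudo-effective, so the direct sum metric just constructed exists.

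For (1) I would use the definition of bigness: there exist $a\in\mathbb{N}_{>0}$ and an ample $A$ with $\Sym^{a}(E)\otimes A^{-1}$ pseudo-effective (here $\widehat{\Sym}^{a}(E)=\Sym^{a}(E)$ since $E$ is locally free, and by Corollary \ref{Vbig} this agrees with the other formulations). If $E$ is big, then writing $\Sym^{a}(E)\otimes A^{-1}=\bigoplus_{|\alpha|=a}(L^{\alpha}\otimes A^{-1})$ and applying part (2) to this direct sum of line bundles, each $L^{\alpha}\otimes A^{-1}$ is pseudo-effective; taking all the weight on the $i$-th factor gives $L_i^{a}\otimes A^{-1}$ pseudo-effective, so $L_i$ is big. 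Conversely, assume every $L_i$ is big. A routine argument with a single fixed ample line bundle $H$ produces one $a\in\mathbb{N}_{>0}$ and one ample $A$ (a suitable power of $H$) for which $L_i^{a}\otimes A^{-1}$ is pseudo-effective for all $i$ simultaneously, using that a nonnegative power of an ample bundle is pseudo-effective and that the pseudo-effective cone is closed under addition. Then for each $\alpha$ with $|\alpha|=a$ one has, in $N^{1}(X)_{\mathbb{R}}$, $c_1(L^{\alpha}\otimes A^{-1})=\sum_i\frac{\alpha_i}{a}\,c_1(L_i^{a}\otimes A^{-1})$, a convex combination of pseudo-effective classes, hence pseudo-effective; thus $\Sym^{a}(E)\otimes A^{-1}=\bigoplus_{|\alpha|=a}(L^{\alpha}\otimes A^{-1})$ is a direct sum of pseudo-effective line bundles, which is pseudo-effective by (2), and therefore $E$ is big.

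The only step carrying genuine content is the verification that $h=\bigoplus_i h_i$ is a Griffiths semipositive singular Hermitian metric. In a local trivialization write $h_i=e^{-\psi_i}$ with $\psi_i$ plurisubharmonic; the dual metric on $E^{*}$ assigns to a local section $u=(f_1,\dots,f_r)$ the weight $\log|u|^{2}_{h^{*}}=\log\bigl(\sum_i|f_i|^{2}e^{\psi_i}\bigr)=\log\bigl(\sum_i e^{g_i}\bigr)$ with $g_i=2\log|f_i|+\psi_i$ plurisubharmonic, and the point is that $\log\bigl(\sum_i e^{g_i}\bigr)$ is plurisubharmonic because $(x_1,\dots,x_r)\mapsto\log\bigl(\sum_i e^{x_i}\bigr)$ is convex and nondecreasing in each variable. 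One also records that $0<\det h=\prod_i h_i<+\infty$ almost everywhere, since each $\psi_i$ is locally bounded above and finite outside a set of measure zero. The remaining ingredients --- the decomposition of $\Sym^{k}$ of a direct sum, passage of global generation to direct summands, and closedness of the pseudo-effective cone under addition and convex combinations --- are routine, so I expect no further obstacle.
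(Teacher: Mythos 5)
Your proof is correct. Part (2) and the ``Moreover'' clause follow the paper's own route almost verbatim: global generation passes to the direct summands $L_i^{ab}\otimes A^{b}$ of $\Sym^{ab}(E)\otimes A^{b}$ for the forward direction, and the direct-sum metric $h=\bigoplus_i h_i$ together with Corollary \ref{sHmweak} for the converse (your explicit check that $\log\bigl(\sum_i e^{g_i}\bigr)$ is plurisubharmonic is a detail the paper leaves implicit, and it is the right one). Where you genuinely diverge is in the converse of (1). The paper stays inside its metric framework: it takes singular metrics $h_i$ with $\sqrt{-1}\Theta_{L_i,h_i}\ge\epsilon_i\omega$ (Demailly's characterization of big line bundles), chooses $c$ with $\min_i\epsilon_i>2/c$, and observes that $\Sym^{c}(h)h_A^{-1}$ is still Griffiths semipositive summand by summand, so $\Sym^{c}(E)\otimes A^{-1}$ carries a Griffiths semipositive singular Hermitian metric and Corollary \ref{Vbig} applies. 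You instead normalize to a single $a$ and a single ample $A$ with every $L_i^{a}\otimes A^{-1}$ pseudo-effective, note that $c_1(L^{\alpha}\otimes A^{-1})$ is a convex combination of the classes $c_1(L_i^{a}\otimes A^{-1})$ in $N^1(X)_{\mathbb{R}}$, and feed the resulting direct sum of pseudo-effective line bundles back into part (2). Both arguments work; yours is more numerical and avoids curvature estimates entirely, at the mild cost of invoking that pseudo-effectivity of a line bundle depends only on its numerical class and that the pseudo-effective cone is closed and convex, while the paper's version keeps the whole lemma as an illustration of the singular-metric machinery it has just set up. Your forward direction of (1), obtained by applying part (2) to the decomposition of $\Sym^{a}(E)\otimes A^{-1}$, is likewise a slight streamlining of the paper's direct global-generation argument and is fine.
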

\begin{proof}
(1) ($\Rightarrow$)
If $E$ is big, then there exist an ample line bundle $A$, $c \in \mathbb{N}_{>0}$, and a Zariski open set $U$, such that $\Sym^{c}(E) \otimes A^{-1}$ is globally generated on $U$.
Thus $L_{i}^{ c} \otimes A^{-1}$ is globally generated on $U$ for any $1 \le i \le r$.
Therefore $L_i$ is big.

(1) ($\Leftarrow$)
Let $A $ be an ample line bundle and $h_A$ be a smooth metric with positive curvature
on $A$, such that $\omega = \sqrt{-1}\Theta_{A , h_A}$ is a \kah form on $X$.
Since $L_i$ is big, there exist a singular Hermitian metric $h_i$ and positive number $\epsilon_{i}$
such that $\sqrt{-1}\Theta_{ L_i, h_i} \ge \epsilon_{i} \omega$.
We define a singular Hermitian metric $h$ on $E$ by $h = \oplus_{i=1}^{r} h_{i}$.
We take $c \in \mathbb{N}_{>0}$ such that $\min_{1 \le i \le r}\epsilon_i > 2 / c$.
Then $\Sym^{c}(E) \otimes A^{-1}$ has a Griffiths semipositive singular Hermitian metric $\Sym^{c}(h) h_{A}^{-1}$,
which completes the proof.

(2) ($\Rightarrow$)
Fix $x \in X$ such that $E$ is weakly positive at $x$.
For any ample line bundle $A$ and $a \in \mathbb{N}_{>0}$, there exists $b \in \mathbb{N}_{>0}$ such that $\Sym^{ab}(E) \otimes A^{ b} $ is globally generated at $x$.
Hence
$L_{i}^{ab} \otimes A^{b}$ is globally generated at $x$ for any $1 \le i \le r$.
Therefore $L_i$ is pseudo-effective.

(2) ($\Leftarrow$)
Since $L_i$ is pseudo-effective, $L_i$ has a singular Hermitian metric $h_i$ 
with semipositive curvature current.
Hence $h := \oplus_{i = 1}^{r} h_i$ is a Griffiths semipositive singular Hermitian metric on $E$.
Therefore $E$ is pseudo-effective by Corollary \ref{sHmweak}.
\end{proof}

\end{exa}

\begin{rem}
In general, a pseudo-effective vector bundle does not necessarily have a Griffiths semipositive singular Hermitian metric.
By Hosono \cite[Example 5.4]{Hos17}, there exists a nef vector bundle which does not have a Griffiths semipositive singular Hermitian metric.

Let $C$ be an elliptic curve.
$E$ is defined by the nontrivial exact sequence of vector bundles:
$$
0 \rightarrow  \mathcal{O}_{C} \rightarrow E \rightarrow  \mathcal{O}_{C} \rightarrow 0.
$$
By \cite[Theorem 1.4]{HIM}, if $E$ has a Griffiths semipositive singular Hermitian metric, then the above exact sequence splits, which is impossible. 
\end{rem}


\section{On the case of torsion-free coherent sheaves}

\begin{thm}
\label{torsionfreecoh}
Let $X$ be a smooth projective variety and $\mathcal{F} \neq 0$ be a torsion-free coherent  sheaf on $X$.
\begin{enumerate}
\item $\mathcal{F}$ is pseudo-effective iff
there exists an ample line bundle $A$, such that $\widehat{\Sym}^k( \mathcal{F} ) \otimes A$ has 
a Griffiths semipositive singular Hermitian metric for any $k \in \mathbb{N}_{>0}$.
\item $\mathcal{F}$ is weakly positive iff
there exist an ample line bundle $A$ and a Zariski open set $U \subset X$, such that $\widehat{\Sym}^k( \mathcal{F} ) \otimes A$ has 
a Griffiths semipositive singular Hermitian metric $h_k$ 
and
the Lelong number of $h_k$ at x is less than 2 for any $x \in U$ and any $k \in \mathbb{N}_{>0}$.
\item $\mathcal{F}$ is big  iff
there exist an ample line bundle $A$ and $k \in \mathbb{N}_{>0}$, such that $\widehat{\Sym}^k( \mathcal{F} ) \otimes A^{-1}$ has 
a Griffiths semipositive singular Hermitian metric.
\end{enumerate}
\end{thm}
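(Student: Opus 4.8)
The plan is to deduce all three equivalences from their vector bundle analogues --- Theorem \ref{weaklypositive}, Corollary \ref{weakviehweg} and Corollary \ref{Vbig} --- by passing to the maximal locally free locus $X_{\mathcal F}$, and, when compactness cannot be avoided, to a resolution. The only inputs beyond Sections 3--4 are three consequences of $\operatorname{codim}(X\setminus X_{\mathcal F})\ge 2$: (a) $\widehat{\Sym}^{k}(\mathcal F)\otimes L$ is reflexive for every line bundle $L$, so $H^{0}(X,\widehat{\Sym}^{k}(\mathcal F)\otimes L)=H^{0}(X_{\mathcal F},\Sym^{k}(\mathcal F|_{X_{\mathcal F}})\otimes L|_{X_{\mathcal F}})$; (b) plurisubharmonic functions extend across $X\setminus X_{\mathcal F}$, so in particular a singular Hermitian metric with semipositive curvature current on the restriction to $X_{\mathcal F}$ of a line bundle on $X$ extends uniquely to $X$; and (c), the key point, if $h$ is a Griffiths semipositive singular Hermitian metric on a reflexive sheaf $\mathcal G$ on $X$, then applying (b) to $\log|u|^{2}_{h^{*}}$ for local generators $u$ of $\mathcal G^{*}$ shows that $h^{*}$ is dominated near $X\setminus X_{\mathcal F}$, up to a polynomial factor, by a fixed smooth metric; consequently the singular Hermitian metric induced by $h$ on $\mathcal O_{\mathbb P(\mathcal G|_{X_{\mathcal F}})}(1)$ via \cite[Proposition 2.3.5]{PT} has at worst logarithmic singularities along the exceptional divisor of a suitable modification, and hence extends there, with semipositive curvature current, after twisting by a suitable multiple of that exceptional divisor.

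For the "only if" directions I would argue directly over the quasi-projective manifold $X_{\mathcal F}$. If $\mathcal F$ is pseudo-effective, say weakly positive at $x_{0}$, fix an ample $A$ with $A\otimes\det\mathcal F^{*}$ ample; for each $k$, weak positivity at $x_{0}$ makes $\widehat{\Sym}^{(k+r)b_{k}}(\mathcal F)\otimes(A\otimes\det\mathcal F^{*})^{b_{k}}$ globally generated on a Zariski open neighbourhood of $x_{0}$ for some $b_{k}$, and the intersection of these countably many Zariski open sets meets $X_{\mathcal F}$, so we may assume $x_{0}\in X_{\mathcal F}$. Now the proofs of Lemma \ref{analytic}, Lemma \ref{PTprop} and Corollary \ref{keycor} go through verbatim with $X$ replaced by $X_{\mathcal F}$ --- the morphism $\pi\colon\mathbb P(\mathcal F|_{X_{\mathcal F}})\to X_{\mathcal F}$ is still projective, so Theorem \ref{pauntaka} applies --- and produce a Griffiths semipositive singular Hermitian metric on $\Sym^{k}(\mathcal F|_{X_{\mathcal F}})\otimes A|_{X_{\mathcal F}}$, i.e., on $\widehat{\Sym}^{k}(\mathcal F)\otimes A$. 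This settles the forward implication of (1); the forward implication of (2) follows the same way, invoking the second assertions of Lemma \ref{analytic} and Corollary \ref{keycor} to keep the non-smooth locus away from a fixed Zariski open $U\subset X_{\mathcal F}$ exactly as in Corollary \ref{weakviehweg}; and (3) reduces to (1), applied to the reflexive sheaf $\widehat{\Sym}^{a}(\mathcal F)\otimes A'^{-1}$ and its symmetric powers, just as Corollary \ref{Vbig} reduces to Theorem \ref{weaklypositive}.

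For the "if" directions one cannot work on $X_{\mathcal F}$, since the $L^{2}$-extension in the proof of Theorem \ref{weaklypositive}, (C) $\Rightarrow$ (A), needs a compact K\"ahler total space and ample line bundles on the base; so I would fix a projective modification $\mu\colon\widetilde X\to X$ with $\widetilde X$ smooth, an isomorphism over $X_{\mathcal F}$, for which $\widetilde{\mathcal F}:=\mu^{*}\mathcal F/(\mathrm{torsion})$ is locally free. Given the metrics $h_{k}$ on $\widehat{\Sym}^{k}(\mathcal F)\otimes A$, Lemma \ref{PTprop} over $X_{\mathcal F}$ together with (c) produces, for the relevant $m$, a singular Hermitian metric with semipositive curvature current on $\mathcal O_{\mathbb P(\widetilde{\mathcal F})}(m+r)\otimes\pi^{*}(\mu^{*}A\otimes\mathcal O_{\widetilde X}(c_{m}\,\mathrm{Exc}(\mu)))$ over all of $\mathbb P(\widetilde{\mathcal F})$ for a suitable $c_{m}$. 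Writing the bundle to be generated, $\mathcal O_{\mathbb P(\widetilde{\mathcal F})}(2ab)\otimes\pi^{*}(\mu^{*}H^{2b}\otimes\mathcal O_{\widetilde X}(c\,\mathrm{Exc}(\mu)))$ with $c$ large enough that by (a) its direct image equals $\widehat{\Sym}^{2ab}(\mathcal F)\otimes H^{2b}$, in the form $K_{\mathbb P(\widetilde{\mathcal F})}\otimes\widetilde L$, and choosing the base point $x$ in $X_{\mathcal F}$ outside the proper Zariski closed sets $\{\nu(\det h_{k},\cdot)\ge 2\}$ (proper by Siu's theorem on $X$ after extending $\det h_{k}$ across $X\setminus X_{\mathcal F}$ using (b)), I would repeat the argument of the proof of (C) $\Rightarrow$ (A) in Theorem \ref{weaklypositive} on $\mathbb P(\widetilde{\mathcal F})$. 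Because $x\in X_{\mathcal F}$ and the cut-off weight is supported near $\pi^{-1}(x)$, the exceptional divisor causes no trouble there; the extended section descends to a section of $\widehat{\Sym}^{2ab}(\mathcal F)\otimes H^{2b}$ over $X$ nonvanishing at $x$, so $\mathcal F$ is weakly positive at $x$. This gives the reverse implication of (1), keeping track of $U$ gives that of (2), and (3) follows from (1) as above.

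The step I expect to be the main obstacle is the line bundle bookkeeping on $\widetilde X$ in the last paragraph: the exceptional twist $c\,\mathrm{Exc}(\mu)$ forced by the requirement "$\pi_{*}(\,\cdot\,)=\widehat{\Sym}^{\bullet}(\mathcal F)\otimes(\text{ample})$" must be reconciled with the positivity that the injectivity theorem \cite[Theorem 1.5]{CDM} and the $L^{2}$-estimate of Step 3 of the proof of Theorem \ref{weaklypositive} demand. Since a positive multiple of an exceptional divisor is never nef, the auxiliary "ample part" of $\widetilde L$ will only carry a \emph{singular} metric with semipositive curvature current --- one uses here that $\mathrm{Exc}(\mu)$ is effective, together with the discrepancy formula $K_{\widetilde X}=\mu^{*}K_{X}\otimes\mathcal O_{\widetilde X}(\sum a_{i}E_{i})$, $a_{i}\ge 1$, to absorb the canonical twists --- but, crucially, that metric is smooth and strictly positive away from $\mathrm{Exc}(\mu)$, which is exactly the region where the negative term of the curvature estimate in Step 3 is concentrated. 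One must also verify carefully, via (b) and (c), that every metric in play genuinely extends across $\pi^{-1}(\mathrm{Exc}(\mu))$ with curvature current bounded below. Everything else is a faithful transcription of Sections 3 and 4.
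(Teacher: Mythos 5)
Your ``only if'' directions coincide with the paper's: work over $X_{\mathcal F}$, where $\mathcal F$ restricts to a vector bundle, use reflexivity of $\widehat{\Sym}^{k}(\mathcal F)\otimes A$ to transfer global generation, run the arguments of Lemma \ref{analytic}, Lemma \ref{PTprop} and Corollary \ref{keycor} there, and extend the resulting metric across $X\setminus X_{\mathcal F}$ using $\operatorname{codim}\ge 2$. That half is fine.

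For the ``if'' directions you diverge, and the divergence rests on a premise that is not correct. You assert that one cannot run the argument of (C) $\Rightarrow$ (A) of Theorem \ref{weaklypositive} over $X_{\mathcal F}$ because the $L^{2}$ step ``needs a compact K\"ahler total space''; the paper does exactly that, over the quasi-projective manifold $X_{\mathcal F}$, by replacing the injectivity theorem of \cite{CDM} with Demailly's $L^{2}$ estimate on complete K\"ahler manifolds \cite[Theorem 5.1]{Dem82} (see also \cite[Theorem 2.5.3]{PT}): $X_{\mathcal F}$ is the complement of an analytic set in a projective manifold, hence carries a complete K\"ahler metric, and so does $\mathbb P(E)|_{X_{\mathcal F}}$. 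This removes the need for a resolution entirely. Your substitute route through $\mu\colon\widetilde X\to X$ and $\widetilde{\mathcal F}=\mu^{*}\mathcal F/\mathrm{torsion}$ is not merely longer; it contains a genuine unclosed gap, which you yourself flag. The locus over which the metrics must be extended on $\mathbb P(\widetilde{\mathcal F})$ is the preimage of $\mathrm{Exc}(\mu)$, a \emph{divisor}, so the codimension-two extension principle (b) does not apply there; your claim (c), that the induced weight on $\mathcal O_{\mathbb P(\widetilde{\mathcal F})}(1)$ has at worst logarithmic poles along that divisor and extends with semipositive curvature after an exceptional twist, is asserted rather than proved, and the subsequent reconciliation of the twist $c\,\mathrm{Exc}(\mu)$ with the positivity required by the injectivity theorem is precisely the ``main obstacle'' you name without resolving. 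As written, the reverse implications of (1)--(3) therefore rest on an unproved extension statement. The fix is simply to drop the resolution and follow the paper: choose $x\in X_{\mathcal F}$ outside $\bigcup_{k}\{\nu(\det h_{k},\cdot)\ge 2\}$ and carry out the extension argument on $\mathbb P(E)$ over $X_{\mathcal F}$ with the complete K\"ahler $L^{2}$ estimate, then use reflexivity to read the produced sections as sections of $\widehat{\Sym}^{2ab}(\mathcal F)\otimes H^{2b}$ on $X$.
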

\begin{proof}
We put  $E \coloneqq \mathcal{F}|_{X_{\mathcal{F} } }$, which is a vector bundle on $X_{\mathcal{F}}$.
Since $\widehat{\Sym}^{k}( \mathcal{F} ) \otimes A$ is reflexive for any $k \in \mathbb{N}_{>0}$,
we have 
\begin{equation}
\label{reflex}
H^{0}(X_{\mathcal{F} } , \Sym^k( E ) \otimes A )
\simeq
H^{0}(X , \widehat{\Sym}^{k}( \mathcal{F} ) \otimes A ).
\end{equation}

(1)($\Rightarrow$).
We assume that $\mathcal{F}$ is pseudo-effective.
We take $x \in X_{\mathcal{F}}$ such that $\mathcal{F}$ is weakly positive at $x$, and take an ample line bundle $A$ such that $A \otimes ( \widehat { \det \mathcal{F} } )^{*}$ is ample.
For any $k \in \mathbb{N}_{>0}$, there exists $b \in \mathbb{N}_{>0}$ such that
$\widehat{\Sym}^{kb}( \mathcal{F}) \otimes ( A \otimes ( \widehat { \det \mathcal{F} } )^{*} )^{b}$ is globally generated at $x$.
Therefore by Equation \ref{reflex}, 
$\Sym^{kb}(E) \otimes ( A \otimes \det E^{*} )^{b}$ on $X_{\mathcal{F}}$ is globally generated at $x$.
By the argument of Corollary \ref{keycor},
there exists a Griffiths semipositive singular Hermitian metric $h$ on 
 $(\widehat{\Sym}^{k}( \mathcal{F}) \otimes A ) |_{ X_{\mathcal{F}} }= \Sym^k(E) \otimes A$  
($h$ is smooth outside a countable union of proper Zariski closed sets).
From $\text{codim}(X \setminus X_{ \mathcal{F} } ) \ge 2$,
$h$ extends to $X_{(\widehat{\Sym}^{k}( \mathcal{F}) \otimes A )}$. 
Therefore $\widehat{\Sym}^{k}( \mathcal{F}) \otimes A $ has a Griffiths semipositive singular Hermitian metric $h$.

(1)($\Leftarrow$).
From $X_{ \mathcal{F} }  \subset X_{(\widehat{\Sym}^{k}( \mathcal{F}) \otimes A )} $ for any
$k \in \mathbb{N}_{>0}$,
$\Sym^k(E) \otimes A$ has a Griffiths semipositive singular 
Hermitian metric for any
$k \in \mathbb{N}_{>0}$.
By using the argument of the proof of (C) $\Rightarrow$ (A) in Theorem \ref{weaklypositive},
there exists $x \in X_{ \mathcal{F} }  $ such that $E$ is weakly positive at $x$.
(We use Demailly's $L^2$ estimate on a complete \kah manifold in \cite[Theorem 5.1]{Dem82}
instead of the injectivity theorem in \cite{CDM} since $X_{ \mathcal{F} }  $ may not be a weakly 1-complete. See also \cite[Theorem 2.5.3]{PT}.) 
Hence $\mathcal{F}$ is pseudo-effective.

(2) The proof is similar to the proof of (1) and Theorem \ref{weakviehweg}.
(We can take $h_{k}$ such that $h_k$ is smooth on $U \cap X_{\mathcal{F}}$ 
if $\mathcal{F}$ is weakly positive on $U$.)

(3)($\Rightarrow$).
The proof is similar to the proof of (1).

(3)($\Leftarrow$).
By Corollary \ref{sHmweak} and (1), $ \widehat{\Sym}^{2k}( \mathcal{F}) \otimes A^{-1} $ is 
pseudo-effective.
\end{proof}

We give an application of Theorem \ref{torsionfreecoh}.
If a torsion-free coherent sheaf $\mathcal{F}$ has a Griffiths semipositive singular Hermitian metric $h$,
then $\mathcal{F}$ is pseudo-effective. 
Moreover if there exists a Zariski open set $U$ such that $h$ is continuous on $U \cap X_{\mathcal{F}}$, then $\mathcal{F}$ is weakly positive at any $x \in U \cap X_{\mathcal{F}}$.

Let $f \colon X \rightarrow Y $ be a surjective morphism between smooth projective varieties.
Then for any $m \in \mathbb{N}_{>0}$, $f_{*}( mK_{X/Y})$ has a Griffiths semipositive singular Hermitian metric, which
is continuous over the regular locus of $f$ by \cite[Theorem 1.1]{PT} or \cite[Theorem 27.1]{HPS}.
Therefore, $f_{*}( mK_{X/Y})$ is weakly positive at any point of the regular locus of $f$.
This was already proved in \cite[Theorem 5.1.2]{PT}.

\section{An augmented base locus and a Griffiths semipositive singular Hermitian metric.}
\label{augmented}
In this chapter, we study a relationship between an augmented base locus and a Griffiths semipositive singular Hermitian metric of a vector bundle.

\subsection{An augmented base locus and a restiricted base locus}
\textit{}

We review some of the standard facts of augmented  base loci and restiricted base loci of vector bundles.
\begin{dfn} \cite[Section 2]{BKKMSU}
Let $X$ be a smooth projective variety and $E$ be a holomorphic vector bundle on $X$.
The {\it base locus} of $E$ is defined by
$$
{\rm Bs}(E) \coloneqq \{ x \in X \colon H^0(X , E ) \rightarrow E_x \mbox{ is not surjective} \},
$$
and the {\it stable base locus} of $E$ is defined by
$$
\mathbb{B}(E) \coloneqq \bigcap_{m > 0} {\rm Bs}(\Sym^m(E)).
 $$

Let $A$ be an ample line bundle.
We define
the {\it augmented base locus} of $E$ by
$$
\mathbb{B}_{+}(E) = \bigcap_{ m\in \mathbb{N}_{>0}} \mathbb{B}( \Sym^m(E) \otimes A^{-1})
$$
and the {\it restricted base locus} of $E$ by
$$
\mathbb{B}_{-}(E) = \bigcup_{m\in \mathbb{N}_{>0}} \mathbb{B}( \Sym^m(E) \otimes A).
$$
\end{dfn}

We point out that both $\mathbb{B}_{+}(E)$ and $\mathbb{B}_{-}(E) $ do not depend on the choice of the ample line bundle $A$ by \cite[Remark 2.7]{BKKMSU}.

\begin{thm}\cite[Proposition 3.1, 3.2, 7.2 and Theorem 6.4]{BKKMSU}
\label{aug_vect}
Let $X$ be a smooth projective variety and $E$ be a holomorphic vector bundle on $X$.
Then the followings hold.
\begin{enumerate}
\item $\pi({\mathbb{B}}_{+}(\mathcal{O}_{ \mathbb{P}(E) }(1)) ) = \mathbb{B}_{+}(E)$ and  $\pi({\mathbb{B}}_{-}(\mathcal{O}_{ \mathbb{P}(E) }(1)) ) = \mathbb{B}_{-}(E)$.
\item $\mathbb{B}_{+}(E) \neq X $ iff $E$ is big.
\item $\mathbb{B}_{-}(E) \neq X$ iff $E$ is pseudo-effective.
\item $\overline{\mathbb{B}_{-}(E)} \neq X$ iff $E$ is weakly positive, where $\overline{\mathbb{B}_{-}(E)} $ is a Zariski closure of $\mathbb{B}_{-}(E)$ in $X$.
\end{enumerate}

\end{thm}
Let $X$ be a smooth projective variety, $L$ be a line bundle, and $h$ be a singular Hermitian metric with semipositive curvature current on $L$.
Let $P_{h}$ denote the set of $x \in X$ such that $\nu(h,x) >0$.
By \cite[Th\'eor\`eme 2.1.20 and Corollaire 2.2.8]{Bou02}, 
if $L$ is big, then $\mathbb{B}_{+}(L) = P_{h_{min}}$, where $h_{min}$ is a minimal singular Hermitian metric of $L$ (see \cite[Theorem 1.5]{DPS}).

\begin{cor}
\label{finsler}
Let $X$ be a smooth projective variety, $E$ be a holomorphic vector bundle on $X$, $\omega$ be a \kah form on $ \mathbb{P}(E)$.
Then the followings hold.
\begin{enumerate}
\item $E$ is big iff 
$ \mathcal{O}_{ \mathbb{P}(E) }(1) $ admits a singular Hermitian metric $h$ such that $\pi(P_{h}) \neq X$ and $\sqrt{-1}\Theta_{\mathcal{O}_{ \mathbb{P}(E) }(1)  , h} \ge \epsilon \omega$, for some $\epsilon >0$.
\item $E$ is pseudo-effective iff there exists a singular Hermitian metric $h_k$ on $\mathcal{O}_{ \mathbb{P}(E) }(1)$ such that $\pi(P_{h_k}) \neq X$ and $\sqrt{-1}\Theta_{ \mathcal{O}_{ \mathbb{P}(E) }(1) , h_k}  \ge \frac{-\omega}{k}$ for any $k \in \mathbb{N}_{>0}$, 
\item $E$ is weakly positive iff there exists a proper Zariski closed set $Z \subset X$ such that $\mathcal{O}_{ \mathbb{P}(E) }(1)$ has a singular Hermitian metric $h_k$ with $\pi(P_{h_k}) \subset Z$ and $\sqrt{-1}\Theta_{ \mathcal{O}_{ \mathbb{P}(E) }(1) , h_k}  \ge \frac{-\omega}{k}$ for any $k \in \mathbb{N}_{>0}$, 
\end{enumerate}
\end{cor}

\begin{proof}
We may assume that $A$ is an ample line bundle on $\mathbb{P}(E) $ and $h_A$ is a smooth positive metric on $A$ such that $\sqrt{-1}\Theta_{A,h_A} = \omega$.

(1)($\Rightarrow$).
By  \cite[Remark 2.7]{BKKMSU}, 
there exists $m \in \mathbb{N}_{>0}$ such that  
$\mathbb{B}_{+}(E) = 
\pi(\mathbb{B}(\mathcal{O}_{ \mathbb{P}(E) }(m) \otimes A^{-1})$.
Hence $\mathcal{O}_{ \mathbb{P}(E) }(m) \otimes A^{-1}$ has a singular Hermitian metric $h_{m}$ with semipositive curvature current with $\pi(P_{h_m}) = \mathbb{B}_{+}(E)$.
Set $h:=(h_{m} h_A)^{\frac{1}{m}}$, which completes the proof.


(1)($\Leftarrow$).
By the assumption, $\mathcal{O}_{ \mathbb{P}(E) }(1)$ is big.
By Theorem \ref{aug_vect} and \cite{Bou02}, 
$$
\mathbb{B}_{+}(E) = \pi(\mathbb{B}_{+}(\mathcal{O}_{ \mathbb{P}(E) }(1)) )= \pi(P_{h_{min}})  \subset \pi(P_{h}),
$$
where $h_{min}$ is a minimal singular Hermitian metric on $\mathcal{O}_{ \mathbb{P}(E) }(1)$.
Therefore $E$ is big.


(2)($\Rightarrow$).
Fix $k \in \mathbb{N}_{>0}$. 
By Theorem \ref{aug_vect},
$$\pi(\mathbb{B}_{+}(\mathcal{O}_{ \mathbb{P}(E) }(k) \otimes A) ) \subset 
\cup_{m \in \mathbb{N}_{>0} }\pi(\mathbb{B}_{+}(\mathcal{O}_{ \mathbb{P}(E) }(m) \otimes A) )
 = \mathbb{B}_{-}(E).$$
Let $h_{min,k}$ be a minimal singular metric on $\mathcal{O}_{ \mathbb{P}(E) }(k) \otimes A$.
Set $h_k:=(h_{min,k}h_{A}^{-1})^{\frac{1}{k}}$.
Then $\pi(P_{h_k}) \neq X$ and $\sqrt{-1}\Theta_{ \mathcal{O}_{ \mathbb{P}(E) }(1) , h_k}  \ge \frac{-\omega}{k}$.


(2)($\Leftarrow$).
Set $H_{k}:=h_{k}^{k} h_A$ for any $k \in \mathbb{N}_{>0}$.
Then $H_{k}$ is a singular Hermitian metric  with semipositve curvature current on $\mathcal{O}_{ \mathbb{P}(E) }(k) \otimes A$.
By Theorem \ref{aug_vect} and \cite{Bou02}, 
$$
\mathbb{B}_{-}(E) = 
\cup_{m \in \mathbb{N}_{>0} } \pi(\mathbb{B}_{+}(\mathcal{O}_{ \mathbb{P}(E) }(m) \otimes A) ))
\subset 
\cup_{m \in \mathbb{N}_{>0} }  \pi(P_{H_{m}}).
$$
Therefore $E$ is pseudo-effective.

(3).
The proof is same as that of (2).
\end{proof}
\begin{rem}
In the hypotheses of Corollary \ref{finsler}, 
if $\mathcal{O}_{ \mathbb{P}(E) }(1) $ has a singular Hermitian metric $h$ with semipositive curvature current such that $\pi(P_{h}) \neq X$, then $E$ is pseudo-effective by 
$\pi(\mathbb{B}_{-}(\mathcal{O}_{\mathbb{P}(E)}(1))) \subset \pi(P_{h}) $.
However, the converse does not necessarily hold. 

Let $X$ be an elliptic curve.
$E$ is defined by the nontrivial exact sequence of vector bundles:
$$
0 \rightarrow  \mathcal{O}_{X} \rightarrow E \rightarrow  \mathcal{O}_{X} \rightarrow 0.
$$
Then $E$ is nef.
Let $h_{min}$ be the minimal singular Hermitian metric on $\mathcal{O}_{ \mathbb{P}(E) }(1) $.
By \cite[Example 1.7]{DPS1}, $h_{min}$ has a pole and $\pi(P_{h_{min}}) = X$.

\end{rem}


\subsection{A Lelong number of a singular Hermitian metric on a vector bundle}
\textit{}

The Lelong number of a singular Hermitian metric on a vector bundle was defined by Berndtsson \cite{Bernd2}.
Based on the Berndtsson's work, we define a new Lelong number.

Let $U$ be a unit ball in $\mathbb{C}^n$, and $E = U \times \mathbb{C}^r$,
$h$ be a Griffiths semipositive singular Hermitian metric.
We take a standard frame $e_1 , \cdots , e_r$ of $E$.
Then we have $ \mathbb{P}(E) = U \times \mathbb{P}^{r-1}$ and $\mathcal{O}_{ \mathbb{P}(E) }(1)$
can be endowed with a singular Hermitian metric $g $ with semipositive curvature current induced by $h$.
We have $g = e^{- \varphi (z , W)}$,
where $\varphi (z , W)$ is a quasi-plurisubharmonic function on  $U \times \mathbb{P}^{r-1}$.

\begin{dfn}
\label{Lelongnumberdfn}
We will denote by $\nu( \varphi , (0 , W))$  
the Lelong number $\varphi$ at $(0 , W) \in \mathbb{P}(E)$. 
We define the following numbers:
$$
\nu_{\sup} ( h , 0) := \sup_{ W \in \mathbb{P}^{r-1}} \nu( \varphi , (0 , W)) ,
\,\,\,\,
\nu_{\inf} ( h , 0) := \inf_{ W \in \mathbb{P}^{r-1}} \nu( \varphi , (0 , W)).
$$
\end{dfn}

We explain more explicitly.
Let $h^{*} = ( h^{*}_{ij})$ be the dual metric of $h$ on $E$.
We take the chart $\{ [W_1 : \cdots : W_r ] \in \mathbb{P}^{r-1} \colon W_r \neq  0\}$ of $\mathbb{P}(E)$.
As in Lemma \ref{analytic}, we define the isomorphism by 
$$
\begin{array}{ccc}
U \times \{ W_r \neq  0\}   & \rightarrow & U \times \mathbb{C}^{r-1} \\
(z , [W_1 : \cdots : W_r])  & \rightarrow  & (z , \frac{W_1}{W_r} , \cdots , \frac{W_{r-1}}{W_r}).
\end{array}
$$
We may regard $U  \times \{ W_r \neq  0\} $  as $ U  \times \mathbb{C}^{r-1} $.
Put $\eta_{l} := \frac{W_{l}}{W_r} $ for $1 \le l \le r-1$ and $\eta_{r} := 1$.
In this setting, we have
$$
 \mathcal{O}_{ \mathbb{P}(E) }(-1) |_{ U  \times \mathbb{C}^{r-1} } = 
 \{ (z ,  \eta , \xi) \in U \times \mathbb{C}^{r-1} \times \mathbb{C}^r
 \colon
 \eta_{i} \xi_{j} = \eta_{j} \xi_{i} \}
$$
and the local section 
$$
e_{ \mathcal{O}_{ \mathbb{P}(E) }(-1) } ( z , (\eta_1 , \cdots, \eta_{r-1 } ))
= 
(z , (\eta_1 , \cdots , \eta_{r-1 } ) ,(\eta_1 , \cdots , \eta_{r-1 }  , 1) ).
$$
Then the dual metric $g^{*} = e ^{ \varphi}$ on $\mathcal{O}_{ \mathbb{P}(E) }(-1)$ is written by
$$
g^{*}(z , \eta)  = | (\eta_1 , \cdots \eta_{r-1 }  , 1)  |^{2}_{ h^{*}} = \sum_{1 \le i , j \le r }h^{*}_{ij} (z)\eta_i \bar{\eta_j}     
= \frac{ \sum_{1 \le i , j \le r} h^{*}_{ij} (z)W_i \bar{W_j}     }{ |W_r |^2}.$$
Therefore, for any $W = [W_1 : \cdots : W_r ]  \in \{ W_r \neq  0\} $, we have 
$$\varphi (z , W)= \log ( \sum_{1 \le i , j \le r} h^{*}_{ij} (z)W_i \bar{W_j} ) - 2 \log  |W_r |$$
and 
$$\nu( \varphi , (0 , W)) = \nu (  \log ( \sum_{1 \le i , j \le r} h^{*}_{ij} (z)W_i \bar{W_j} ) , (0 , W) ).$$

In this setting, we explain the relationship with the Lelong number defined by Berndtsson \cite{Bernd2}.
For any $a = (a_1 , \cdots , a_r) \in \mathbb{C}^{r} \setminus \{ 0 \}$, 
we write $u_{a} = \sum_{1 \le i \le r} a_{i} e_{i} ^{*}$.
In \cite{Bernd2}, the Lelong number of $h^{*}$  at 0 in the direction $u_a$ is defined by 
$$
\gamma_{h^{*} }( u_a , 0) = \liminf_{z \rightarrow 0} \frac{ \log |u_a|^{2}_{h^{*} } } {\log | z|}.
$$

From  $\log |u_a|^{2}_{h^{*} }  =  \log (\sum_{1 \le i , j \le r} h^{*}_{ij} (z)a_i \bar{a_j})$,
we have the following corollary.
\begin{cor}
In the above setting, the following hold.
\begin{enumerate}
\item $\gamma_{h^{*} }( u_a , 0) = \nu( \varphi |_{ U \times \{ [a_1 :  \cdots  : a_r ]\} }, 0)
\ge \nu( \varphi , (0 ,  [a_1 :  \cdots  : a_r ]) )$
 for any  $a = (a_1 , \cdots , a_r) \in \mathbb{C}^{r} \setminus \{ 0 \}$.
\item  $\gamma_{h^{*} }( u_a , 0) = \nu( \varphi , (0 ,  [a_1 :  \cdots  : a_r ]) )$ 
for general $a = (a_1 , \cdots , a_r) \in \mathbb{C}^{r} \setminus \{ 0 \}$.
\item $\nu_{\inf} ( h , 0) = \inf_{ a \in \mathbb{C}^r \setminus \{ 0 \} }\gamma_{h^{*} }( u_a , 0)$.
\end{enumerate}
\end{cor}
\begin{proof}
The first equality of (1) is clear.
By \cite[Theorem 7.13]{Dem2} we have the second inequality of (1) and the equality of (2).

We now prove (3).
By (1), we have $ \nu_{\inf} ( h , 0) \le \inf_{ a \in  \mathbb{C}^r \setminus \{ 0 \} }\gamma_{h^{*} }( u_a , 0)$.
By \cite[Lemma 2.17]{Dem}, we have 
$$
\nu_{\inf} ( h , 0) = \nu( \varphi , \{ 0 \} \times \mathbb{P}^{r-1} ) = \nu( \varphi , (0 , [a_1 :  \cdots  : a_r ] )) 
$$
for general $ (a_1 , \cdots , a_r) \in \mathbb{C}^{r} \setminus \{ 0 \}$.
Therefore by using (2), the proof is complete.
\end{proof}



\subsection{A higher rank analogy of Boucksom's non-\kah locus}
\textit{}


We introduce the non-\kah loci 
on vector bundles.

\begin{dfn}
Let $X$ be a smooth projective variety and $E$ be a holomorphic vector bundle on $X$.
\begin{enumerate}
\item For any $k \in \mathbb{N}_{>0}$ and any ample line bundle $A$, we set
$$
\mathcal{H}_{k , A} ^{+}= \{ h \colon h \mbox{ is a Griffiths semipositive singular Hermitian metric of } 
\Sym^{k}(E) \otimes A^{-1} \}
$$ 
\item If $E$ is big, the {\it non-\kah locus} $L_{nK}( E)$ is defined by
$$
L_{nK}( E) := \bigcap_{ k , A} \bigcap_{h \in \mathcal{H}_{k , A} ^{+}}\{ x \in X \colon \nu_{\sup} ( h , x) > 0\},
$$
where the cap is taken over all $k \in \mathbb{N}_{>0}$ and all ample line bundle $A$ on $X$.
By Theorem \ref{maintheorem}, this locus is well-defined.
\end{enumerate}
\end{dfn}

This is a higher rank analogy of Boucksom's 
non-\kah locus in \cite{Bou}.
In this section, we prove the following theorem.

\begin{thm}
\label{Boucksom}
If $E$ is big, then $L_{nK}( E) =\mathbb{B}_{+}(E)$.
\end{thm}

Therefore, we give a characterization of  the augmented base loci 
by using singular Hermitian metrics on vector bundles and the Lelong numbers as in \cite[Corollaire 2.2.8]{Bou02}.

Before the proof, we recall  a singular Hermitian metric induced by holomorphic sections, proposed by Hosono \cite[Chapter 4]{Hos17}.
We assume that $E$ is a globally generated at a general point.
Let $s_1, \dots, s_N \in H^0( X , E )$ be holomorphic sections. 
We take a local coordinate $U$ 
and take a local holomorphic frame $e_1, \dots, e_r$ of $E$ on $U$.
Write $s_{\alpha }= \sum_{1 \le j \le r} f_{ \alpha j}e_{j} $, where $f_{ \alpha j}$ are holomorphic functions on $U$.
A singular Hermitian metric $h_s$ induced by $s_1, \dots, s_N$ is given by
$$
(h^{*}_{s} )_{jk } := \sum_{1 \le \alpha \le N} f_{\alpha j } \bar{ f_{\alpha k} } .
$$
By \cite[Example 3.6 and Proposition 4.1]{Hos17}, $h_s$ is Griffiths semipositive.
\begin{prop}
\label{Hosono}
In this setting, 
 $\{ x \in X \colon \nu_{\sup} ( h_s , x) >0 \} \subset Bs( E)$.
\end{prop}

\begin{proof}
The $N \times r$ matrix $A$ is defined by
$
A_{\alpha j} = f_{ \alpha j}
$
as in Lemma \ref{analytic}.
By the standard linear algebra, we have $Bs(E) \cap U = \{ x \in U \colon \text{rank } A(x) < r \}$.

Let $g = e^{-\varphi}$ be a singular Hermitian metric with semipositive curvature current on $\mathcal{O}_{ \mathbb{P}(E) }(1)$
induced by $h_s$.
By the above argument, we have
$$
\nu( \varphi , (z , W)) = \nu (  \log (  \sum_{1 \le j,k\le r , 1 \le \alpha \le N} f_{\alpha j } W_j \overline{ f_{\alpha k} W_k}) , (0 , W) ).
$$
If $ \nu_{\sup} ( h_s , x) >0 $, there exists $a \in \mathbb{P}^{r-1}$ such that 
$\nu ( \varphi  , ( x , a ) ) >0$.
We obtain 
$$ \sum_{1 \le j,k\le r , 1 \le \alpha \le N} f_{\alpha j }(x) a_j \overline{ f_{\alpha k}(x) a_k} = 0,
$$
and consequently $\sum_{1 \le j \le r}f_{\alpha j }(x) a_j  = 0$ for any $1 \le \alpha \le N$. 
Hence $\text{rank } A(x) < r$, therefore  $x \in Bs( E)$.
\end{proof}

Now, we prove the Theorem \ref{Boucksom}.
\begin{proof}
First we show that $L_{nK}( E) \subset \mathbb{B}_{+}(E)$.  We take a sufficiently ample line bundle $A$ such that 
$ \mathbb{B}_{+}(E) = \bigcap_{  q \in \mathbb{N}_{>0}} \mathbb{B}( \Sym^q(E) \otimes A^{-1})$ by \cite[Remark 2.7]{BKKMSU}.
It is enough to show that  $L_{nK}( E) \subset \mathbb{B}( \Sym^q(E) \otimes A^{-1})$ for any $q \in \mathbb{N}_{>0}$ such that $\mathbb{B}( \Sym^q(E) \otimes A^{-1}) \neq X$.
Fix $q \in \mathbb{N}_{>0}$. We take $m \in \mathbb{N}_{>0} $ such that $\mathbb{B}( \Sym^q(E) \otimes A^{-1}) = Bs (\Sym^{qm}(E) \otimes A^{-m})$.
From $Bs (\Sym^{qm}(E) \otimes A^{-m}) \neq X$, $\Sym^{qm}(E) \otimes A^{-m}$ can be endowed with 
a Griffiths semipositive singular Hermitian metric $h$ induced by global sections and  
$\{ x \in X \colon \nu_{\sup} ( h , x) >0 \} \subset Bs (\Sym^{qm}(E) \otimes A^{-m})$ holds.
By $h \in \mathcal{H}_{qm , A^m} ^{+}$ and the definition of $L_{nK}( E)$, the proof is complete.

For inverse inclusion, we take $x \notin L_{nK}( E)$.
There exist $k \in \mathbb{N}_{>0}$, an ample line bundle $A$, and a Griffiths semipositive singular Hermitian metric $h$ on 
$\Sym^{k}(E) \otimes A^{-1}$ such that $\nu_{\sup} ( h , x) =0$.
We will show that $x \notin \mathbb{B}_{+}(E)$, more precisely, there exists $q \in \mathbb{N}_{>0}$ such that
 $\Sym^{q}(E) \otimes A^{-1}$ is globally generated at $x$.
This proof is similar to the proof of Theorem \ref{weaklypositive}.

We take a local coordinate $(U ; z_1 , \cdots , z_n)$ near $x$.
Let $ \phi = \eta(n+1) \log |z-x|^2$, where $\eta $ is a cut-off function such that $\eta \equiv 1 $ near $x$.
Put $\psi := \frac{n}{n+1} \pi^{*} \phi$.
Let $h_{A}$ be a smooth positive Hermitian metric on $A$.
We take $m \in \mathbb{N}_{>0}$ such that 
\begin{enumerate}
\item $ m\sqrt{-1} \Theta_{A, h_{A}} + \deldel \eta \ge 0$ in the sense of current, and
\item $\mathcal{O}_{ \mathbb{P}(E) }(r) \otimes \pi^{*}( A^{-1} \otimes K_{X}^{-1} \otimes \det E^{*} \otimes A^m)$ has a smooth positive metric $h_0$.
\end{enumerate}

We will denote by $g$ the singular Hermitian metric with semipositive curvature current on $\mathcal{O}_{ \mathbb{P}(E) }(k) \otimes  A^{-1}$ induced by $h$.
From  $\nu_{\sup} ( h , x) =0$, there exists a open set 
$x \in V \subset \subset U $ such that $g^{2m}$ is integrable on $\pi^{-1}(V)$ by Skoda's Theorem. 
Set 
$$\widetilde{ L} := (\mathcal{O}_{ \mathbb{P}(E) }(2km) \otimes  \pi^{*}A^{-2m} )\otimes
(\mathcal{O}_{ \mathbb{P}(E) }(r) \otimes \pi^{*}( A^{-1} \otimes K_{X}^{-1} \otimes \det E^{*} \otimes A^m)) \otimes  \pi^{*}A^{m}$$
and $\widetilde{h}:=g^{2m}h_0\pi^{*}(h_{A}^{m})$.
Then we have
\begin{align*}
\mathcal{O}_{ \mathbb{P}(E) }(2km) \otimes  \pi^{*}A^{-1}
&=
K_{\mathbb{P}(E)} \otimes \mathcal{O}_{ \mathbb{P}(E) }(2km) \otimes  \pi^{*}A^{-2m} \otimes
\mathcal{O}_{ \mathbb{P}(E) }(r) \otimes \pi^{*}( A^{-1} \otimes K_{X}^{-1} \otimes \det E^{*} \otimes A^m)  \otimes  \pi^{*}A^{m }\\
&= K_{\mathbb{P}(E)} \otimes \widetilde{ L}
\end{align*}

By the same argument of Theorem \ref{weaklypositive}, $\widetilde{ L}$ has a singular Hermitian 
metric $\widetilde{h}$ with semipositive curvature current such that
 $$
\sqrt{-1} \Theta_{ \widetilde{ L} ,    \widetilde{h}  } + ( 1 + \frac{\alpha}{n})\deldel \psi \ge 0
\mbox{  in the sense of current }
$$
for any $ \alpha \in [0 , 1]$.
In this setting, 
$\Sym^{2km}(E) \otimes A^{-1}$ is globally generated at $x$, since
the same proof as in Step 3 of (C) $\Rightarrow$ (A) in Theorem \ref{weaklypositive} works.
\end{proof}
It would be interesting to be able to describe non nef locus by using singular Hermitian metrics as in Theorem \ref{Boucksom}.
\end{document}